\let\pa=\partial
\let\s=\sigma
\let\f=\frac
\let\D=\Delta
\let\wt=\widetilde
\let\wh=\widehat
\def\om{\omega^{\beta}}
\def\dive{\mathop{\rm div}\nolimits}
\def\ucurl{u^{\beta^\perp}_{{\rm curl}}}
\def\udiv{u^{\beta^\perp}_{{\rm div}}}
\def\uh{u^{\beta^\perp}}
\def\nablah{\nabla_{\beta^\perp}}
\def\Dh{\D_{\beta^\perp}}
\def\pb{\partial_\beta}
\def\ub{u^\beta}
\def\dhk{\Delta_k^{\beta^\perp}}
\def\dvl{\Delta_\ell^{\beta}}
\def\dhkp{\Delta_{k'}^{\beta^\perp}}
\def\dvlp{\Delta_{\ell'}^{\beta}}
\def\dvlpp{\Delta_{\ell''}^{\beta}}
\def\wtdhkp{\wt\Delta_{k'}^{\beta^\perp}}
\def\wtdvlp{\wt\Delta_{\ell'}^{\beta}}
\def\Shkp{S_{k'-1}^{\beta^\perp}}
\def\Shk{S_{k-1}^{\beta^\perp}}
\def\Svlp{S_{\ell'-1}^{\beta}}
\def\dB{\dot{B}}
\def\dH{\dot{H}}
\def\cA{{\mathcal A}}
\def\cB{{\mathcal B}}
\def\cC{{\mathcal C}}
\def\cF{{\mathcal F}}
\def\cH{{\mathcal H}}
\def\Z{\mathop{\mathbb Z\kern 0pt}\nolimits}
\def\N{\mathop{\mathbb N\kern 0pt}\nolimits}
\def\Q{\mathop{\mathbb Q\kern 0pt}\nolimits}
\def\R{\mathop{\mathbb R\kern 0pt}\nolimits}
\def\eqdef{\buildrel\hbox{\footnotesize def}\over =}
\def\eqdefa{\buildrel\hbox{\footnotesize def}\over =}
\newcommand{\andf}{\quad\hbox{and}\quad}
\newcommand{\with}{\quad\hbox{with}\quad}
\def\Supp{\mathop{\rm Supp}\nolimits\ }
\newcommand{\beq}{\begin{equation}}
\newcommand{\eeq}{\end{equation}}
\newcommand{\ben}{\begin{eqnarray}}
\newcommand{\een}{\end{eqnarray}}
\newcommand{\beno}{\begin{eqnarray*}}
\newcommand{\eeno}{\end{eqnarray*}}
\newtheorem{defi}{Definition}[section]
\newtheorem{thm}{Theorem}[section]
\newtheorem{lem}{Lemma}[section]
\newtheorem{rmk}{Remark}[section]
\newtheorem{cor}{Corollary}[section]
\newtheorem{prop}{Proposition}[section]
\numberwithin{equation}{section}
\begin{document}

\title[One component regularity criteria for 3-D Navier-Stokes equations]
{On the one time-varying component regularity criteria for 3-D Navier-Stokes equations}

\author[Y. Liu]{Yanlin Liu}
\address[Y. Liu]{School of Mathematical Sciences,
Laboratory of Mathematics and Complex Systems,
MOE, Beijing Normal University, 100875 Beijing, China.}
\email{liuyanlin@bnu.edu.cn}

\author[P. Zhang]{Ping Zhang}
\address[P. Zhang]{Academy of Mathematics $\&$ Systems Science
	and Hua Loo-Keng Center for Mathematical Sciences, Chinese Academy of
	Sciences, Beijing 100190, CHINA, and School of Mathematical Sciences,
	University of Chinese Academy of Sciences, Beijing 100049, China.} \email{zp@amss.ac.cn}

\date{\today}

\begin{abstract}
 In this paper, we consider the  one time-varying component regularity criteria for local strong solution of 3-D Navier-Stokes
equations. Precisely, if $\beta(t)$ is a piecewise $H^1$ unit vector from $[0,T] $  to $\Bbb{S}^2$ with finitely many jump
discontinuities, we prove that if $\int_0^{T}\|u(t)\cdot \beta(t)\|_{\dH^{\f32}(\R^3)}^2\,dt<\infty,$
then the solution $u$ can be extended beyond the time $T.$ Compared with the previous results \cite{CZ5,CZZ,LeiZhao}
concerning one-component regularity criteria,
here the unit vector $\beta(t)$ varies with time variable.

\end{abstract}
\maketitle

\noindent {\sl Keywords:} Navier-Stokes equations,
regularity criteria, anisotropic Littlewood-Paley theory

\vskip 0.2cm
\noindent {\sl AMS Subject Classification (2000):} 35Q30, 76D03  \
\setcounter{equation}{0}
\section{Introduction}
In this paper, we investigate the necessary condition for the
breakdown of regularity of strong solutions to
 $3$-D incompressible Navier-Stokes equations:
\begin{equation*}
(NS)\qquad \left\{\begin{array}{l}
\displaystyle \pa_t u+u\cdot\nabla u-\D u=-\nabla P, \qquad (t,x)\in\R^+\times\R^3, \\
\displaystyle \dive u = 0, \\
\displaystyle  u|_{t=0}=u_0,
\end{array}\right.
\end{equation*}
where $u$ stands for the  fluid  velocity and
$P$ for the scalar pressure function, which guarantees the divergence free condition of the velocity field.

In seminal paper \cite{lerayns}, among other important results, Leray
 proved the local existence and uniqueness of the strong solution to $(NS)$: $u\in C([0,T];H^1(\R^3))\cap L^2(]0,T[; \dot{H}^2(\R^3))$\footnote{Throughout this paper,
we use $\dH^s(\R^3)$ (resp. $H^s(\R^3)$) to denote
 homogeneous (resp. inhomogeneous) Sobolev space with norm defined by
$$\|a\|_{\dH^s(\R^3)}^2\eqdef\int_{\R^3}|\xi|^{2s}|\wh a(\xi)|^2\,d\xi,
\quad\Bigl(~\text{resp.}~\|a\|_{H^s(\R^3)}^2\eqdef
\int_{\R^3}(1+|\xi|)^{2s}|\wh a(\xi)|^2\,d\xi~\Bigr).$$}.
And the well-known Ladyzhenskaya-Prodi-Serrin criteria claims that
if the maximal existence time $T^\ast$ of a strong
solution $u$ is finite, then there holds
\begin{equation}\label{blowupLPS}
\int_0^{T^\ast}\|u(t)\|_{L^q(\R^3)}^p\,dt=\infty,
\quad\forall\  p\in[2,\infty[\with\f2p+\f3q=1.
\end{equation}
In view of Sobolev embedding theorem,
we can derive a weaker form of \eqref{blowupLPS} that
\begin{equation}\label{blowupbasic}
\int_0^{T^\ast}\|u(t)\|_{\dH^{\f12+\f2p}(\R^3)}^p\,dt=\infty,
\quad\forall\  p\in[2,\infty[,
\end{equation}
which was in fact proved by Fujita and Kato in \cite{fujitakato} for
the mild solutions constructed there.

It is worth mentioning that, the end-point case of \eqref{blowupLPS}
when $p=\infty$, namely
\begin{equation}\label{blowupendpoint}
\limsup_{t\rightarrow T^\ast}
\|u(t)\|_{L^3(\R^3)}=\infty,
\end{equation}
is much deeper, which is proved by Escauriaza, Seregin and \v{S}ver\'{a}k
 in \cite{ISS}
by using the technique of backward uniqueness and unique continuation.
One can also check \cite{KK11} for a different approach by using profile decomposition,
and \cite{Tao} for a quantitative blow-up rate.

Before proceeding, let us recall the scaling property of $(NS)$, which means that for any solution $u$ of $(NS)$ on $[0,T]$ and any parameter $\lambda>0$, $u_\lambda$ defined by
\beq \label{scale} u_\lambda(t,x)\eqdef\lambda u(\lambda^2 t,\lambda x) \eeq
is also a solution of $(NS)$ on $[0,T/\lambda^2].$
As Leray emphasized in \cite{lerayns} that all the reasonable estimates to $(NS)$ should be invariant under the scaling transformation
\eqref{scale}. And it is not difficult to verify that, the criteria \eqref{blowupLPS}-\eqref{blowupendpoint} are all scaling invariant.
\smallskip

Next, we review some remarkable blow-up criteria that  involves only
one entry of $u$ or $\nabla u$. The first result in this direction is due to Neustupa and Penel [21]. Kukavica and Ziane proved in \cite{Kukavica} that
\begin{equation}\label{KZ}
T^\ast<\infty\Longrightarrow \int_0^{T^\ast}\|u^3(t)\|_{L^q}^p\,dt=\infty \with \f2p+\f3q\leq\f58
\andf q\in \bigl[{24}/5,\infty\bigr].
\end{equation}
After this, there are numerous works trying to refine the range of $(p,q)$,
here we only list \cite{CT08,CT11,ZP10} for instance.
However, it is worth mentioning that, the norms involved in these criteria are all far from being scaling invariant.
Until very recently, Chae and Wolf \cite{CW21} made an important progress
to generalize \eqref{KZ}  for any $(p,q)$ satisfying $\f2p+\f3q<1$ with $q\in ]3,\infty].$
 Laterly, by using the Lorentz space
$L^{q,1}_t(L^p)$ instead of the Lebesgue space $L^{q}_t(L^p)$,
\cite{WWZ} finally attained the scaling invariant case with $\f2p+\f3q=1$.
Observing that the results in \cite{CW21,WWZ} are very close to
the one-component version of \eqref{blowupLPS}.

On the other hand, as far as we know, the first scaling invariant
regularity criteria for $(NS)$ that involves only one component of $u$
was given by Chemin and the second author in \cite{CZ5}.
Precisely, they proved the one-component version of \eqref{blowupbasic}:
\begin{equation}\label{blowupCZ5}
T^\ast<\infty\Longrightarrow
\int_0^{T^\ast}\|u^3(t)\|_{\dH^{\f12+\f2p}}^p\,dt
=\infty,\quad\forall\ p\in]4,6[.
\end{equation}
Later, \cite{CZZ} generalized \eqref{blowupCZ5} to $p\in]4,\infty[$,
and \cite{LeiZhao} dealt with the remaining case for $p\in[2,4]$.

We mention that, due to the Galilean invariance  of the system $(NS)$, all the one-component criteria listed above hold not only for $u^3$, but also for $u\cdot e$, where $e$ can be any unit constant vector in $\R^3.$ However, it seems that there is
  no work investigating the time-dependent unit vector case.
   And this is  the aim of this paper.

Our main result states as follows:

\begin{thm}\label{thm1}
{\sl If a strong solution $u$ to $(NS)$ blows up at some finite time $T^\ast$,
then for any $\beta(t)\in\Omega(T^\ast)$, there holds
\begin{equation}\label{blowupthm1}
\int_0^{T^\ast}\|u(t)\cdot \beta(t)\|_{\dH^{\f32}(\R^3)}^2\,dt=\infty.
\end{equation}
Here $\Omega(T)$ is a subset of time-dependent unit vector fields
defined as follows:
\begin{equation}\begin{split}\label{defomt}
\Omega(T)\eqdef\Bigl\{\beta:[0,T[&\rightarrow\mathbb{S}^2\,\big|
\text{ $\beta(t)$  has finitely many jump discontinuities:}\\
& T_1,\cdots, T_n,\  \text{on $]0,T[$\ with
$\beta'\in L^2(]T_{i-1},T_i[)$ for each $i\in [1,n]$}\Bigr\}.
\end{split}\end{equation}
}\end{thm}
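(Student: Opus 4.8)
The plan is to argue by contradiction. Suppose that for some $\beta\in\Omega(T^\ast)$ one has $\int_0^{T^\ast}\|\ub(t)\|_{\dH^{\f32}}^2\,dt<\infty$, where $\ub\eqdef u\cdot\beta$, and show that then $u$ stays bounded in a scaling-critical norm as $t\to T^\ast$, contradicting the blow-up. Since $\beta$ is continuous off the finite set $\{T_1,\dots,T_n\}$ and $\beta'\in L^2$ on each $]T_{i-1},T_i[$, it suffices to propagate the bound across one such interval at a time, using $u(T_i^-)$ as data for the next; the finitely many jumps cause no accumulation. I therefore fix an interval on which $\beta\in H^1$. A key structural remark is that $\beta=\beta(t)$ is independent of $x$, so at each fixed time the splitting $\R^3=\R^2_{\beta^\perp}\times\R_\beta$ is a constant rotation of frame; in particular the divergence-free relation $\pb\ub=-\dive_{\beta^\perp}\uh$ holds exactly as in the fixed-vector analysis of \cite{CZ5,CZZ,LeiZhao}, where $\uh\eqdef u-\ub\beta$.

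First I would decompose $u=\ub\beta+\uh$ and project $(NS)$ onto $\beta$ and onto $\beta^\perp$. Because $\beta$ depends on $t$, the equation for $\ub$ acquires the extra term $u\cdot\beta'$ and that for $\uh$ the terms $-(u\cdot\beta')\beta$ and $-\ub\beta'$; the crucial simplification is $\beta\cdot\beta'=\f12\f{d}{dt}|\beta|^2=0$, so $u\cdot\beta'=\uh\cdot\beta'$ involves only the perpendicular part, and every new $\beta'$-contribution is linear in $(\ub,\uh)$ with coefficient $|\beta'(t)|\in L^2_t$. I then set up the anisotropic Littlewood-Paley decomposition $\dhk\dvl$ adapted to the moving frame $\{\beta^\perp(t),\beta(t)\}$ as in \cite{CZ5}. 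The genuinely new point is that these projectors depend on $t$ through $\beta(t)$, so differentiating an anisotropic norm in time generates commutators $[\pa_t,\dhk\dvl]$. Writing $\dhk\dvl$ as a Fourier multiplier with a smooth cutoff of $(\xi\cdot\beta^\perp(t),\xi\cdot\beta(t))$, I expect
\begin{equation*}
\bigl\|[\pa_t,\dhk\dvl]a\bigr\|_{L^2}\lesssim|\beta'(t)|\,2^{\max(k,\ell)}\bigl\|\wt\Delta_k^{\beta^\perp}\wt\Delta_\ell^\beta a\bigr\|_{L^2},
\end{equation*}
since $\pa_t$ of the cutoff produces one factor $\beta'(t)$ and a $\nabla_\xi$ costing one frequency power, identifying the frame rotation as a forcing of order one derivative weighted by $|\beta'(t)|$.

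Next I would run an anisotropic energy estimate for $\uh$ in a scaling-critical norm of Chemin--Zhang type, controlling $\|\uh\|_{\dB^{0,\f12}}$ together with the dissipation $\|\uh\|_{\dB^{1,\f12}}$: apply $\dhk\dvl$ to the $\uh$-equation, pair with $\dhk\dvl\uh$ in $L^2$, and resum with the appropriate vertical weights. The convection term is treated by Bony's decomposition in the moving frame, the decisive step being to use $\pb\ub=-\dive_{\beta^\perp}\uh$ to convert each vertical derivative $\pb$ falling on the good component $\ub$ into horizontal derivatives of $\uh$, so that $\ub$ enters at top order only through $\|\ub\|_{\dH^{\f32}}$; the purely perpendicular self-interaction of $\uh$ is absorbed by the dissipation via the anisotropic product laws of \cite{CZ5,CZZ,LeiZhao}. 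The commutators above and the $\beta'$-terms contribute forcings of the form $|\beta'(t)|\,\|\uh\|_{\dB^{1,\f12}}\|\uh\|_{\dB^{0,\f12}}$ and $|\beta'(t)|\,\|\ub\|_{\dH^{\f12}}\|\uh\|_{\dB^{0,\f12}}$, the first absorbed by Young's inequality into the dissipation and the second kept as a Gronwall coefficient.

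Assembling everything yields a differential inequality
\begin{equation*}
\f{d}{dt}\|\uh\|_{\dB^{0,\f12}}^2+\|\uh\|_{\dB^{1,\f12}}^2\lesssim f(t)\bigl(1+\|\uh\|_{\dB^{0,\f12}}^2\bigr),
\end{equation*}
where $f(t)\eqdef\|\ub\|_{\dH^{\f32}}^2+|\beta'(t)|^2+|\beta'(t)|\,\|\ub\|_{\dH^{\f12}}$ lies in $L^1(]T_{i-1},T_i[)$: the first term by assumption, the second since $\beta'\in L^2$, and the third by Cauchy--Schwarz together with the interpolation $\|\ub\|_{\dH^{\f12}}\lesssim\|u\|_{L^2}^{2/3}\|\ub\|_{\dH^{\f32}}^{1/3}$ and the Leray bound on $\|u\|_{L^2}$, which place $\|\ub\|_{\dH^{\f12}}\in L^2_t$. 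Gronwall's lemma then bounds $\|\uh\|_{\dB^{0,\f12}}$ up to $T^\ast$, and combined with the assumed control of $\ub$ this bounds $u$ in a critical norm, contradicting the blow-up and proving \eqref{blowupthm1}. I expect the main obstacle to be the moving-frame commutators: establishing the displayed bound with a harmless dependence on $k,\ell$ under the anisotropic summation, and verifying that the resulting one-derivative forcing is exactly absorbable by the dissipation and by the $L^2_t$ integrability of $\beta'$, so that no summability or logarithmic loss destroys the scaling-critical Gronwall argument.
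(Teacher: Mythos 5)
Your overall strategy (contradiction, frame splitting, propagating a scaling-critical anisotropic norm) is natural, but the step you yourself flag as the main obstacle --- the moving-frame commutator --- is exactly where the proposal breaks, and the paper's proof is designed precisely to avoid ever facing that step. The claimed bound $\|[\pa_t,\dhk\dvl]a\|_{L^2}\lesssim|\beta'(t)|\,2^{\max(k,\ell)}\|\wt\Delta_k^{\beta^\perp}\wt\Delta_\ell^{\beta}a\|_{L^2}$ is not what the computation gives: differentiating the symbol $\varphi(2^{-k}|\xi\times\beta(t)|)\,\varphi(2^{-\ell}|\xi\cdot\beta(t)|)$ in $t$ produces factors $2^{-k}|\xi|\,|\beta'|$ and $2^{-\ell}|\xi|\,|\beta'|$, and since $|\xi|\sim2^{\max(k,\ell)}$ on the block, the correct multiplier bound is $|\beta'|\,2^{\max(k,\ell)-\min(k,\ell)}$. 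Compared with the dissipation, which gains only $2^{2\max(k,\ell)}$ on the block, this loses an unbounded factor $2^{-\min(k,\ell)}$ whenever $\min(k,\ell)<0$. This is not a technical artifact: rotating $\beta$ by an angle $\theta$ moves a frequency $\xi$ with $|\xi\times\beta|=\e\ll|\xi\cdot\beta|$ to one whose horizontal component has size about $\theta|\xi\cdot\beta|$, which can exceed $\e$ by an arbitrarily large multiplicative factor, so the horizontal dyadic decomposition is simply not stable under frame rotations near the $\beta$-axis. Consequently the commutator forcing is not of the form $|\beta'|\,\|\uh\|_{\dB^{1,\f12}}\|\uh\|_{\dB^{0,\f12}}$, Young's inequality cannot absorb it into the dissipation, and the critical Gronwall argument collapses at frequencies nearly parallel to $\beta(t)$. (Even if it closed, your last sentence --- boundedness of a critical norm of $\uh$ plus $\ub\in L^2_t(\dH^{\f32})$ ``contradicts blow-up'' --- still requires a continuation argument, which in the constant-$\beta$ papers is itself a substantial step.)

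The paper's proof is organized so that no time-dependent norm is ever differentiated in time. It performs plain $L^2$ energy estimates (the $L^2$ norm is frame-independent) on the frame-adapted unknowns $\om=\pa_\tau u^\nu-\pa_\nu u^\tau$ and $\pb\ub$; the time dependence of the frame $(\tau,\nu,\beta)$ from Lemma \ref{lemTNB} then enters through the equations \eqref{4.3} as linear forcings $\tau'\cdot(\cdots)$, $\nu'\cdot(\cdots)$, $\beta'\cdot(\cdots)$, which after integration by parts are controlled by $\|u_0\|_{L^2}^2\bigl(|\tau'|^2+|\nu'|^2+|\beta'|^2\bigr)\in L^1_t$. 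The anisotropic spaces and the product/interpolation Lemmas \ref{leminter} and \ref{lemproduct} are used only at fixed time, in the trilinear estimate for $I_3$, so no commutator with $\pa_t$ ever appears. The price is a differential inequality with coefficient $C/\s$ and power $1+\s/(1-\s)$, valid for all small $\s>0$, for which plain Gronwall also fails; this is closed by the iteration Lemma \ref{lem2.1}. Finally, the resulting bound on $(\om,\pb\ub)$, i.e.\ on $\nablah\uh$, feeds into an $H^1$ energy estimate in which every term of the nonlinearity is rewritten, using $\dive u=0$, in the form $\nabla u\otimes\nabla u\otimes\nablah\uh$, giving continuation past each $T_i$ by classical theory. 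To salvage your approach you would have to replace the propagation of time-dependent anisotropic norms by estimates of frame-adapted quantities in frame-independent norms --- which is exactly the paper's route.
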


\begin{rmk}
It is interesting to observe that the one-component criteria
indicates that if $T^\ast$ is finite,
then $u$ blows up in every direction simultaneously. This reflects
the isotropic property of the viscous incompressible fluids.
While comparing to all the previous results,
Theorem \ref{thm1} allows us to take this component differently in different time.
In this sense, Theorem \ref{thm1} is more convincing that the possible blow-up can happen only isotropically.

On the other hand, we think it could be a more exciting result, and of course much more challenging, to drop all the smoothness assumptions on $\beta$ in Theorem \ref{thm1}. Precisely, we can raise the following question:

If a strong solution $u$ to $(NS)$ blows up at some finite time $T^\ast$, then can we prove
$$\int_0^{T^\ast}\|u(t)\cdot \beta(t)\|_{\dH^{\f32}(\R^3)}^2\,dt=\infty,
\quad\forall\ \beta:[0,T^\ast]\rightarrow\mathbb{S}^2?$$
In particular, does there necessarily hold
\footnote{One may compare this with \eqref{blowupCZ5}, which asserts that $$\min\Bigl\{\int_0^{T^\ast}
\|u^1(t)\|_{\dH^{\f32}(\R^3)}^2\,dt,
~\int_0^{T^\ast}\|u^2(t)\|_{\dH^{\f32}(\R^3)}^2\,dt,
~\int_0^{T^\ast}\|u^3(t)\|_{\dH^{\f32}(\R^3)}^2\,dt
\Bigr\}=\infty.$$}
$$\int_0^{T^\ast}\min\Bigl\{
\|u^1(t)\|_{\dH^{\f32}(\R^3)}^2,
~\|u^2(t)\|_{\dH^{\f32}(\R^3)}^2,
~\|u^3(t)\|_{\dH^{\f32}(\R^3)}^2\Bigr\}\,dt=\infty?$$
\end{rmk}
\smallskip

Let us end this section with some notations that we shall use throughout this paper.

\noindent{\bf Notations:}
We denote $C$ to be an absolute constant
which may vary from line to line.
And $a\lesssim b$ means that $a\leq Cb$.
$\cF a$ or $\widehat{a}$ denotes the Fourier transform of $a$,
while $\cF^{-1} a$ denotes its inverse.
For a Banach space $B$, we shall use the shorthand $L^p_T(B)$ for $\bigl\|\|\cdot\|_B\bigr\|_{L^p(]0,T[)}$.
And $(a,b)_{\cH}$ designates
the inner product in the Hilbert space $\cH$.

\section{An iteration lemma}

This section is devoted to the study of a common differential inequality,
which might be of independent interest.
Let us consider the following differential inequality for $f(t)\geq 0$:
\begin{equation}\label{2.1}
\left\{\begin{array}{l}
\displaystyle \f{d}{dt}f(t)\leq \f{M}{\s}f^{1+\s}(t)\phi(t),\\
\displaystyle  f|_{t=0}=f_0,
\end{array}\right.
\end{equation}
where $\s,~M$ and $f_0$ are some positive constants,
$\phi$ is some non-negative function which satisfies
\begin{equation}\label{2.2}
\Phi(t',t)\eqdef\int_{t'}^t\phi(s)\,ds<\infty,\quad\forall\ 0\leq t'<t\leq T.
\end{equation}

This kind of differential inequality is often encountered  in the study of PDE.
One may expect to estimate the solution of \eqref{2.1} through Gronwall's type argument
for $\s$ sufficiently small.
However, due to the appearance of $\s^{-1}$
in the coefficient of \eqref{2.1}, it  allows more rapid growth.
Indeed we deduce from
\eqref{2.1} that
$$\f{d}{dt}f^{-\s}(t)=-\s f^{-1-\s}(t)\f{d}{dt}f(t)
\geq -M\phi(t).$$
Integrating the above inequality over $[0,t]$ gives
$$f(t)\leq\bigl(f_0^{-\s}-M\Phi(0,t)\bigr)^{-\f{1}{\s}},$$
which can not rule out the possibility that the solution $f(t)$
may blow up at some finite time $t$ in case
$\Phi(0,t)\geq M^{-1}f_0^{-\sigma}$.

This indicates that if we wish to control $f$ on the whole time interval $[0,T]$,
it is crucial to require some smallness condition for $\Phi(0,T)$.
Unfortunately, in most cases we  only have  the boundedness of $\Phi(0,T)$.
Yet this intuition motivates us to propose the following iteration method to treat the
differential inequality of the type \eqref{2.1}.

\begin{lem}\label{lem2.1}
{\sl\begin{enumerate}
\item[(i)] Let $\{\s_k\}_{k=1}^\infty$ be  a decreasing sequence with
\begin{equation}\label{2.3}
\s_1>\s_2>\cdots>0,\andf \lim_{k\to \infty}\s_k = 0.
\end{equation} Let
 $f$ be a non-negative function which satisfies
\begin{equation}\label{2.4}
\left\{\begin{array}{l}
\displaystyle \f{d}{dt}f(t)\leq \f{M}{\s_k}f^{1+\s_k}(t)\phi(t),
\quad\forall\ k\in\N^+,\\
\displaystyle  f|_{t=0}=f_0,
\end{array}\right.
\end{equation}
where $M$ is an absolute positive constant
which does not depend on $\s_k$,
and $\phi(t)$ satisfies \eqref{2.2}.
Then there exists some constant
$A>0$ depending only on $M,~f_0,~\Phi(0,T)$ and the sequence $\{\s_k\}_{k=1}^\infty$
such that
\begin{equation}\label{2.5}
f(t)\leq A,\quad\forall\ t\in[0,T].
\end{equation}

\item[(ii)] If  there exists some $\delta>0$ such that $f$ satisfies \eqref{2.1}
for every $\s\in]0,\delta]$, then the bound $A$ in \eqref{2.5}
can be chosen to be $f_0\bigl(2^{\f1\delta}+f_0\bigr)
^{2^{2+16M\Phi(0,t)}}$.
\end{enumerate}
}\end{lem}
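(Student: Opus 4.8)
The plan is to prove Lemma \ref{lem2.1} by the iteration strategy foreshadowed in the discussion preceding it: the key observation is that on any time interval where $\Phi$ is \emph{small}, the crude bound $f(t)\leq\bigl(f_0^{-\s}-M\Phi(0,t)\bigr)^{-\f1\s}$ actually controls $f$, and the $\s^{-1}$ in the coefficient lets us take $\s$ large enough (among the admissible $\s_k$) to absorb the loss at each step. First I would subdivide $[0,T]$ into finitely many subintervals $[t_{j-1},t_j]$ on each of which $\Phi(t_{j-1},t_j)\leq\f{1}{2M}$, say; since $\Phi(0,T)<\infty$ by \eqref{2.2} and $\Phi$ is absolutely continuous in its upper limit, only $N\eqdef\lceil 2M\Phi(0,T)\rceil$ such pieces are needed.

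The heart of the argument is the one-step estimate. Fix a subinterval $[t',t'']$ with $\Phi(t',t'')\leq\f{1}{2M}$ and suppose $f(t')\leq B$ is already known. For part (i), I would choose $k$ so large that $\s_k$ is small enough to make $B^{\s_k}\leq 2$ (possible since $\s_k\to0$), and then integrate the differential inequality \eqref{2.4} with that fixed index $k$: the computation $\f{d}{dt}f^{-\s_k}\geq-M\phi$ gives
\begin{equation*}
f(t)\leq\bigl(f(t')^{-\s_k}-M\Phi(t',t)\bigr)^{-\f1{\s_k}}
\leq\bigl(B^{-\s_k}-\tfrac12\bigr)^{-\f1{\s_k}},\quad t\in[t',t''].
\end{equation*}
Because $B^{\s_k}\leq2$ forces $B^{-\s_k}\geq\f12$, the base stays positive, and a short estimate shows the right-hand side is bounded by something like $(2B)$ up to a fixed multiplicative factor; I would make this explicit so that the per-step growth of the bound is controlled by a universal constant. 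Iterating over the $N$ subintervals then yields $f(T)\leq A$ with $A$ depending only on $f_0$, $M$, $\Phi(0,T)$ and how fast $\s_k\to0$ (the latter through how large $k$ must be to achieve $B^{\s_k}\leq2$ at each stage), which is exactly the dependence claimed in \eqref{2.5}.

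For part (ii), the admissible exponents form the whole interval $]0,\delta]$, so at each step I can select $\s$ \emph{optimally} as a function of the current bound $B$ rather than settling for whatever $\s_k$ happens to be available. Choosing $\s=\min\{\delta,\,(\log_2 B)^{-1}\}$ (or $\s=\delta$ when $B\leq2^{1/\delta}$, i.e. $B^\delta\leq2$) keeps $B^\s\leq2$ while making $\s$ as large as possible, which tightens the one-step bound; tracking the recursion $B_{j}\mapsto B_{j+1}$ carefully, each step at most squares the quantity $2^{1/\delta}+B_j$, so after $N$ steps one lands on the explicit double-exponential $f_0\bigl(2^{\f1\delta}+f_0\bigr)^{2^{2+16M\Phi(0,t)}}$. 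The main obstacle, and the only delicate point, is bookkeeping in part (ii): one must verify that the substitution $\s=(\log_2 B)^{-1}$ interacts correctly with the exponent $\f1\s=\log_2 B$ in $(\,\cdot\,)^{-1/\s}$ so that the growth per step is genuinely a squaring rather than something worse, and that the number of subintervals $N\approx2M\Phi(0,T)$ feeds into the tower exponent $2^{2+16M\Phi}$ with the stated constants. I expect the estimates themselves to be elementary; the care lies entirely in choosing $\s$ at each step and chaining the bounds so the final constant matches \eqref{2.5} exactly.
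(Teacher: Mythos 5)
Your overall strategy is essentially the paper's: subdivide $[0,T]$ into finitely many intervals on which $\Phi$ is small, run a one-step estimate on each interval with an exponent chosen small relative to the current bound, and iterate. The only mechanical difference is that you integrate $\frac{d}{dt}f^{-\sigma}\geq -M\phi$ directly, whereas the paper runs a continuity (bootstrap) argument plus Gronwall's inequality on each subinterval, propagating the inductive bound $f^{\sigma_i}(t)\leq 4f_0^{\sigma_i}$ and passing to a subsequence with $\sigma_{k+1}\leq\frac12\sigma_k$. Your route is viable (modulo the standard technicality of replacing $f$ by $f+\varepsilon$ so that $f^{-\sigma}$ is legitimate), and part (i) survives in the end, since for (i) any finite per-step growth iterated over finitely many intervals yields a bound with the stated dependence.

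However, there are two concrete quantitative errors, and the second one is a genuine gap for part (ii). First, the one-step bound as stated is vacuous at the endpoint: with per-interval mass $\Phi\leq\frac{1}{2M}$ and only $B^{\sigma_k}\leq 2$, the base $B^{-\sigma_k}-\frac12$ can equal $0$, so $\bigl(B^{-\sigma_k}-\frac12\bigr)^{-1/\sigma_k}$ can be $+\infty$; you need $B^{\sigma_k}$ bounded strictly away from $2$, or a smaller per-interval mass. Second, the per-step growth is never ``$2B$ up to a fixed factor,'' nor a squaring, with your parameters. Writing $c=B^{\sigma}\in\,]1,2[$, the one-step output equals $B^{\,g(c)}$ with $g(c)=\log_c\frac{2c}{2-c}$, so the ratio to $B$ is $B^{\,g(c)-1}$, not a constant; moreover $c^3(2-c)\leq\frac{27}{16}<2$ on $]1,2[$, which is equivalent to $g(c)>4$ for every admissible $c$. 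Hence with mass $\frac1{2M}$ per interval the best possible per-step growth exceeds a fourth power, the recursion ``$N=\lceil 2M\Phi\rceil$ steps, squaring each step'' cannot be run, and the claimed constant is not reached this way. The fix is to subdivide more finely, which is exactly where the paper's constant $16$ comes from: require $\Phi\leq\frac{1}{4M}$ per interval, so $N\leq 4M\Phi+1$ and $2^N\leq 2^{2+16M\Phi}$, and on each interval take $\sigma=\delta$ if $B\leq 2^{1/\delta}$ and $\sigma=1/\log_2 B$ otherwise, so that $B^\sigma\leq 2$ and $2^{1/\sigma}=\max\bigl\{2^{1/\delta},B\bigr\}$. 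Then $f(t)\leq B\bigl(1-\tfrac14 B^\sigma\bigr)^{-1/\sigma}\leq B\,2^{1/\sigma}\leq B\bigl(2^{1/\delta}+B\bigr)$, and the recursion $B_{j+1}\leq B_j\bigl(2^{1/\delta}+B_j\bigr)$ propagates the bound $B_j\leq f_0\bigl(2^{1/\delta}+f_0\bigr)^{2^j-1}$ by induction (using $2^{1/\delta}+f_0X\leq(2^{1/\delta}+f_0)X$ for $X\geq1$), giving exactly the claimed $f_0\bigl(2^{1/\delta}+f_0\bigr)^{2^{2+16M\Phi}}$ after $N$ steps. Note this multiplicative bookkeeping, which keeps the prefactor $f_0$ intact, is also how the paper tracks its constant.
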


\begin{proof} (i)
In view of \eqref{2.3}, up to a subsequence, we may assume
\begin{equation}\label{2.6}
f_0^{\s_1}\leq2,\andf 0<\s_{k+1}\leq2^{-1}\s_k,\quad\forall\ k\in\N^+.
\end{equation}

While due to $\Phi(0,T)<\infty$, we can divide $[0,T]$
into $n=[16M\Phi(0,t)]+1$ subintervals with: $0=T_0<T_1<\cdots<T_n=T,$
such that
$$\int_{T_{i-1}}^{T_i}\phi(s)\,ds<\f{1}{16M},\quad\forall\ 1\leq i\leq n,$$
which together with the assumption \eqref{2.6} implies that
\begin{equation}\label{2.7}
4Mf_0^{\s_k}\int_{T_{i-1}}^{T_i}\phi(s)\,ds<\f12,\quad\forall\
1\leq i\leq n,~\forall\ k\in\N^+.
\end{equation}

In the following, we shall prove by induction that,
for any $1\leq i\leq n$, $f$ satisfies
\begin{equation}\label{2.8}
f^{\s_i}(t)\leq 4f^{\s_i}_0,\quad\forall\
t\in[T_{i-1},T_i].
\end{equation}

\noindent{\bf Step 1.}
Observing that for the case when $i=1$,
 $f^{\s_1}(0)=f^{\s_1}_0$, we  define
$$T_1^\star\eqdef\sup\left\{\mathfrak{T}\in]0,T_1]\,\big|\,
f^{\s_1}(t)\leq 4f^{\s_1}_0,\quad\forall\
t\in[0,\mathfrak{T}]\right\}.$$
Then for any $t\in[0,T_1^\star]$,
we get, by using the inequality \eqref{2.4} with $k=1,$ that
$$\f{d}{dt}f(t)\leq \f{M}{\s_1}f(t)f^{\s_1}(t)\phi(t)
\leq\f{4Mf^{\s_1}_0}{\s_1}f(t)\phi(t).$$
By applying Gronwall's inequality and using \eqref{2.7}, we infer
$$f(t)\leq f_0\exp\bigl(\f{4Mf^{\s_1}_0\Phi(0,t)}{\s_1}\bigr)
<f_0 e^{\f{1}{2\s_1}},\quad\forall\ t\in[0,T_1^\star],$$
which implies
$$f^{\s_1}(t)<\sqrt{e}f_0^{\s_1},\quad\forall\ t\in[0,T_1^\star].$$
This contradicts with the definition of $T_1^\star$, unless $T_1^\star=T_1,$
which leads to \eqref{2.8} for $i=1$.

\noindent{\bf Step 2.}
Let us assume that \eqref{2.8} holds  for $1\leq i\leq k-1$ with some $k\geq2$,
we  aim  to prove \eqref{2.8} for $i=k$.
In particular, it follows from  the case when $i=k-1$ that
$$f^{\s_{k-1}}(T_{k-1})\leq 4f^{\s_{k-1}}_0,$$
which together with the assumption: $\s_k\leq2^{-1}\s_{k-1},$ ensures that
\begin{equation}\label{2.9}
f^{\s_k}(T_{k-1})\leq 4^{\f{\s_k}{\s_{k-1}}}f^{\s_k}_0
\leq2f^{\s_k}_0.
\end{equation}
Thanks to \eqref{2.9}, we  define
$$T_k^\star\eqdef\sup\bigl\{\ \mathfrak{T}\in]T_{k-1},T_k]\,\big|\,
f^{\s_k}(t)\leq 4f^{\s_k}_0,\quad\forall\
t\in[T_{k-1},\mathfrak{T}]\bigr\}.$$
Then for any $t\in[T_{k-1},T_k^\star]$,
we get, by using the inequality \eqref{2.4}, that
$$\f{d}{dt}f(t)\leq \f{M}{\s_k}f(t)f^{\s_k}(t)\phi(t)
\leq\f{4Mf^{\s_k}_0}{\s_k}f(t)\phi(t).$$
By applying Gronwall's inequality
and using \eqref{2.7},~\eqref{2.9}, we infer
$$f(t)\leq f(T_{k-1})\exp\bigl(\f{4Mf^{\s_k}_0\Phi(T_{k-1},t)}{\s_k}\bigr)
<2^{\f{1}{\s_k}}f_0 e^{\f{1}{2\s_k}},\quad\forall\ t\in[T_{k-1},T_k^\star],$$
which implies
$$f^{\s_k}(t)<2\sqrt{e}f_0^{\s_k},\quad\forall\ t\in[T_{k-1},T_k^\star].$$
This contradicts with the definition of $T_k^\star$, unless $T_k^\star=T_k,$
so that we  proved \eqref{2.8} for $i=k.$ Then by induction, \eqref{2.8} holds for every $1\leq i\leq n$,
which implies the desired estimate \eqref{2.5}.
\smallskip

\noindent (ii) If there exists some $\delta>0$ such that $f$ satisfies \eqref{2.1}
for every $\s\in]0,\delta]$, then we can choose
$\{\s_k\}_{k=1}^\infty\subset]0,\delta]$ with
$$\s_1\eqdefa \min\bigl\{\log_{1+f_0}2,\delta\bigr\}>0,\andf \s_k\eqdefa 2^{-k+1}\s_1.$$
Then it follows from \eqref{2.8} and the choice of
$n=[16M\Phi(0,t)]+1$ that
$$f(t)\leq 4^{\f{1}{\s_n}}f_0
=4^{\f{2^{[16M\Phi(0,t)]}}{\s_1}}f_0
\leq f_0\bigl(2^{\f1\delta}+f_0\bigr)
^{2^{1+[16M\Phi(0,t)]}},\quad\forall\ t\in[0,T].$$
This completes the proof of  Lemma \ref{lem2.1}.
\end{proof}

\begin{cor}\label{cor2.1}
{\sl Under the assumption of Lemma \ref{lem2.1},
if $g:[0,T]\rightarrow\R^+$ satisfies
\begin{equation}\label{2.10}
\left\{\begin{array}{l}
\displaystyle \f{d}{dt}g(t)\leq \f{M}{\s_k}g^{1+\s_k}(t)\phi(t)
+M_1g(t)V_1(t)+M_2V_2(t),\\
\displaystyle g|_{t=0}=g_0
\end{array}\right.
\end{equation}
 for every $k\in\N^+$,
where $M_1$ and $M_2$ are some nonnegative constants,
$V_1(t)$ and $V_2(t)$ are  nonnegative functions satisfying
$$\int_0^T\bigl(V_1(t)+V_2(t)\bigr)\,dt<\infty,$$
then $g$  is uniformly bounded  on $[0,T]$.
}\end{cor}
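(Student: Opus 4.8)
The plan is to reduce Corollary \ref{cor2.1} to the already-proved Lemma \ref{lem2.1} by absorbing the two extra lower-order terms $M_1 g(t)V_1(t)$ and $M_2 V_2(t)$ into the evolution through an integrating-factor (Gronwall) change of unknown. The first observation is that these extra terms are \emph{subcritical} relative to the superlinear term $\frac{M}{\s_k}g^{1+\s_k}\phi$: they are linear in $g$ with integrable coefficients. So I would first define a new unknown that removes the linear term $M_1 g V_1$ exactly, and then handle the inhomogeneous term $M_2 V_2$ by a translation.

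\medskip

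\noindent\textbf{Step 1: Remove the linear term by an integrating factor.}
I set $E(t)\eqdef\exp\bigl(-M_1\int_0^t V_1(s)\,ds\bigr)$, which by hypothesis satisfies $0<E_\ast\leq E(t)\leq 1$ for some positive constant $E_\ast$ since $\int_0^T V_1<\infty$. Multiplying \eqref{2.10} by $E(t)$ and writing $h(t)\eqdef E(t)g(t)$, the $M_1 g V_1$ term cancels against $E'(t)g(t)$, leaving
$$\f{d}{dt}h(t)\leq \f{M}{\s_k}E(t)g^{1+\s_k}(t)\phi(t)+M_2E(t)V_2(t)
\leq \f{M}{\s_k}E(t)^{-\s_k}h^{1+\s_k}(t)\phi(t)+M_2V_2(t),$$
where I used $g=E^{-1}h$ and $E(t)\leq 1$ in the last term. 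Since $E(t)^{-\s_k}\leq E_\ast^{-\s_k}\to 1$ as $\s_k\to 0$, and more importantly $E(t)^{-\s_k}\leq E_\ast^{-\s_1}\eqdef C_0$ is bounded uniformly in $k$, the superlinear coefficient is controlled by $\frac{MC_0}{\s_k}\phi(t)$, i.e. of exactly the form in \eqref{2.4} with $M$ replaced by the absolute constant $MC_0$ independent of $\s_k$.

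\medskip

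\noindent\textbf{Step 2: Absorb the inhomogeneous term.}
It remains to deal with $M_2V_2(t)$. I set $G(t)\eqdef h(t)+M_2\int_0^t V_2(s)\,ds$, so that $G(0)=h(0)=g_0$ and $G$ is non-negative and non-decreasing in its inhomogeneous part. Then $\frac{d}{dt}G(t)=\frac{d}{dt}h(t)+M_2V_2(t)\leq \frac{MC_0}{\s_k}\phi(t)\,h^{1+\s_k}(t)$. Because $h(t)\leq G(t)$ pointwise (the added integral is non-negative), I get
$$\f{d}{dt}G(t)\leq \f{MC_0}{\s_k}G^{1+\s_k}(t)\phi(t),\quad\forall\ k\in\N^+,$$
with $G|_{t=0}=g_0$. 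This is precisely the differential inequality \eqref{2.4}, now with the absolute constant $MC_0$ and initial datum $g_0$, and the same $\phi$ satisfying \eqref{2.2}. Lemma \ref{lem2.1}(i) therefore yields a constant $A$ with $G(t)\leq A$ on $[0,T]$. Finally $g(t)=E(t)^{-1}h(t)\leq E_\ast^{-1}G(t)\leq E_\ast^{-1}A$, which gives the desired uniform bound.

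\medskip

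\noindent I expect the only delicate point to be Step 1, specifically checking that the modified superlinear coefficient $\frac{M}{\s_k}E(t)^{-\s_k}\phi(t)$ still has its prefactor bounded by an absolute constant times $\s_k^{-1}$ \emph{uniformly} in $k$ — this is what lets me invoke Lemma \ref{lem2.1} with a fixed $M$. Since $E_\ast^{-\s_k}$ is decreasing in $k$ (as $\s_k\downarrow 0$ and $E_\ast\leq 1$), it is bounded by $E_\ast^{-\s_1}$, so this causes no trouble; the rest is a routine integrating-factor computation and the pointwise comparison $h\leq G$.
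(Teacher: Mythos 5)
Your Step 1 is correct: the integrating factor $E(t)=\exp\bigl(-M_1\int_0^t V_1(s)\,ds\bigr)$ cancels the linear term exactly, and your observation that $E(t)^{-\s_k}\leq E_\ast^{-\s_1}\eqdef C_0$ uniformly in $k$ is precisely what keeps the superlinear prefactor of the admissible form (absolute constant)$/\s_k$, so that Lemma \ref{lem2.1}(i) remains applicable. The problem is Step 2, which as written fails. After Step 1 you have
$$\f{d}{dt}h(t)\leq \f{MC_0}{\s_k}h^{1+\s_k}(t)\phi(t)+M_2V_2(t),$$
so with your choice $G(t)=h(t)+M_2\int_0^t V_2(s)\,ds$ the correct computation is
$$\f{d}{dt}G(t)=\f{d}{dt}h(t)+M_2V_2(t)\leq \f{MC_0}{\s_k}h^{1+\s_k}(t)\phi(t)+2M_2V_2(t):$$
adding the integral from $0$ to $t$ has the wrong sign, so the inhomogeneous term is doubled rather than cancelled, and the display you assert, $\f{d}{dt}G\leq\f{MC_0}{\s_k}\phi\,h^{1+\s_k}$, is false.

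The repair is immediate and keeps your two-step structure: take the tail integral $G(t)\eqdef h(t)+M_2\int_t^T V_2(s)\,ds$. Then $\f{d}{dt}G=\f{d}{dt}h-M_2V_2\leq\f{MC_0}{\s_k}h^{1+\s_k}\phi\leq\f{MC_0}{\s_k}G^{1+\s_k}\phi$, using $h\leq G$ and the monotonicity of $x\mapsto x^{1+\s_k}$ on $\R^+$, with finite initial datum $G(0)=g_0+M_2\int_0^T V_2(s)\,ds$; Lemma \ref{lem2.1}(i) then bounds $G$, hence $g=E^{-1}h\leq E_\ast^{-1}G$. For comparison, the paper performs both absorptions in one stroke by setting $h(t)=\bigl(1+g(t)\bigr)\exp\bigl(-M_1\int_0^t V_1(s)\,ds-M_2\int_0^t V_2(s)\,ds\bigr)$: the shift $g\mapsto 1+g$ makes the inhomogeneous term obey $M_2V_2\leq(1+g)M_2V_2$, so it too is killed by the integrating factor, at the harmless cost of a factor $\exp\bigl(\s_k M_1\int_0^T V_1(s)\,ds+\s_k M_2\int_0^T V_2(s)\,ds\bigr)$ multiplying $1/\s_k$ --- the same uniform-in-$k$ point you already addressed.
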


\begin{proof}
Let us introduce $h:[0,T]\rightarrow\R^+$ as
$$h(t)\eqdef \bigl(1+g(t)\bigr)
\exp\Bigl(-M_1\int_0^t V_1(s)\,ds-M_2\int_0^tV_2(s)\,ds\Bigr).$$
Then it is easy to observe that $h(t)$ satisfies
\begin{equation*}
\left\{\begin{array}{l}
\displaystyle \f{d}{dt}h(t)
\leq \f{M\exp\bigl(\s M_1\int_0^T V_1(s)\,ds+\s M_2\int_0^TV_2(s)\,ds\bigr)}{\s}
h^{1+\s}(t)\phi(t),\\
\displaystyle h|_{t=0}=g_0,
\end{array}\right.
\end{equation*}
which is of the same form as \eqref{2.1}.
Then  Corollary \ref{cor2.1} follows from Lemma \ref{lem2.1}.
\end{proof}

\section{The functional spaces and some technical lemmas}

In this section,  we shall first introduce the functional spaces
that we are going to use in the following context, and then present some technical lemmas.

As we are going to study the one-component regularity criteria,
it is natural to use spaces that are different in the direction of $\beta(t)$ and
the directions that are perpendicular to $\beta(t).$

\begin{defi}\label{XY}
{\sl For any unit vector $\beta(t)\in\Bbb{S}^2$, we use $\R^2_{\beta^\perp}$
to denote the plane orthogonal to $\beta(t)$,
and $\R_{\beta}$ to denote the line parallel with $\beta(t)$.
For any Banach spaces $X$ and $Y$ on $\R^2_{\beta^\perp}$
and $\R_{\beta}$ respectively, we designate the time-dependent mixed space
$X_{\R^2_{\beta^\perp}}\left(Y_{\R_{\beta}}\right)$ as
$$\|f\|_{X_{\R^2_{\beta^\perp}}\left(Y_{\R_{\beta}}\right)}
\eqdef\|f\|_{X(\R^2_{\beta^\perp};Y(\R_{\beta}))}<\infty.$$
In particular, $L^p_{\beta^\perp}(L^q_\beta)$ denotes $L^p(\R^2_{\beta^\perp}; L^q(\R_\beta)).$
And for any $s_1,s_2\in\R$, we also denote
the anisotropic Sobolev space
$\dH^{s_1}_{\beta^\perp}(\dH^{s_2}_\beta)$ briefly as
$\dH^{s_1,s_2}_{\beta}$, whose norm is given by
$$\|a\|^2_{\dH^{s_1,s_2}_{\beta}} \eqdef \int_{\R^3}
|\xi\times\beta(t)|^{2s_1}|\xi\cdot\beta(t)|^{2s_2}
|\wh a(\xi)|^2\,d\xi.$$
}\end{defi}

Noticing that the norm $\dH^{s_1,s_2}_{\beta(t)}$
actually varies with time $t$, hence it seems not convenient to perform $\dH^{s_1,s_2}_{\beta(t)}$ estimate to the solution $u$ of $(NS)$. Fortunately, for any time $t$, it follows from
Plancherel's identity  that $\dH^{0,0}_{\beta(t)}=L^2$,
while the obvious fact that: $|\xi\times\beta|\leq|\xi|$ and
$|\xi\cdot\beta|\leq|\xi|,$ ensures the following embedding inequalities:
\begin{equation}\label{embeta}
\|a\|_{\dH^{s_1,s_2}_{\beta(t)}}\leq\|a\|_{\dH^{s_1+s_2}},
\andf\|a\|_{\dH^{-s_1,-s_2}_{\beta(t)}}
\geq\|a\|_{\dH^{-s_1-s_2}},
\quad\forall\ s_1\geq0,~s_2\geq0.
\end{equation}

Before proceeding, let us present the explicit coordinate basis for
the plane orthogonal to $\beta(t)$, which will make our statement much easier.

\begin{lem}\label{lemTNB}
{\sl For any finite $T>0$ and any $\beta\in \Omega(T)$,
there exists $\tau,~\nu\in\Omega(T)$ such that
\begin{equation}\label{TNB}
\tau\cdot\nu=\nu\cdot\beta=\beta\cdot\tau=0,\andf
\tau\cdot(\nu\times\beta)=1.
\end{equation}
Moreover, for any $t\in]0,T[,$
$\tau'(t)$ and $\nu'(t)$ exist whenever $\beta'(t)$ exists, and there holds
$$|\tau_i'(t)|+|\nu_i'(t)|\leq C|\beta_i'(t)|.$$
}\end{lem}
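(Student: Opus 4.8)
The plan is to construct $\tau$ and $\nu$ explicitly on each interval of continuity of $\beta$, using a formula that depends smoothly (indeed rationally) on the components of $\beta$, and then to verify the Frenet-type orthonormality relations \eqref{TNB} together with the derivative bound by direct differentiation. Since $\beta\in\Omega(T)$ means $\beta$ is piecewise $H^1$ with finitely many jumps $T_1,\dots,T_n$, it suffices to produce $\tau,\nu$ on each open subinterval $]T_{i-1},T_i[$ and observe that the resulting fields automatically inherit the same jump structure, so that $\tau,\nu\in\Omega(T)$ as well.

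The key difficulty is that there is no \emph{globally} smooth choice of orthonormal frame field on all of $\mathbb{S}^2$ (the hairy-ball obstruction), so any closed-form expression will degenerate somewhere. To get around this, I would first normalize: write $\beta=(\beta_1,\beta_2,\beta_3)$ and set, on the region where $\beta_1^2+\beta_2^2\neq 0$,
\begin{equation*}
\tau\eqdef\f{1}{\sqrt{\beta_1^2+\beta_2^2}}(-\beta_2,\beta_1,0),
\andf \nu\eqdef\beta\times\tau.
\end{equation*}
One checks immediately that $\tau$ is a unit vector orthogonal to $\beta$, that $\nu$ is then automatically a unit vector orthogonal to both, and that $\tau\cdot(\nu\times\beta)=\tau\cdot(\nu\times\beta)=1$ follows from $\nu\times\beta=\tau$ (since $\{\tau,\nu,\beta\}$ is a positively oriented orthonormal frame). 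This gives \eqref{TNB}. To handle the bad set where $\beta_1^2+\beta_2^2$ is small — that is, where $\beta$ is near the poles $\pm e_3$ — I would keep a second formula built from a different pair of components, e.g. using $(0,-\beta_3,\beta_2)$ when $\beta_2^2+\beta_3^2\neq 0$, and cover $[0,T[$ by finitely many subintervals on each of which one of these denominators stays bounded below. This is legitimate because $|\beta|=1$ forces at least one of the quantities $\beta_1^2+\beta_2^2$, $\beta_2^2+\beta_3^2$ to exceed $1/2$ pointwise, and by continuity on each piece one can refine the partition (adding finitely many breakpoints, which only enlarges the allowed jump set) so that a single formula is valid throughout each refined subinterval.

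It remains to establish the derivative estimate $|\tau_i'(t)|+|\nu_i'(t)|\leq C|\beta_i'(t)|$ whenever $\beta'(t)$ exists. On a subinterval where the active denominator $D\eqdef\sqrt{\beta_1^2+\beta_2^2}$ is bounded below by a fixed positive constant (guaranteed by the partition above), the map $\beta\mapsto\tau$ is a smooth rational function of $\beta$ with bounded gradient, so $\tau'=(\nabla_\beta\tau)\,\beta'$ gives $|\tau'|\leq C|\beta'|$; differentiating $\nu=\beta\times\tau$ yields $\nu'=\beta'\times\tau+\beta\times\tau'$, hence $|\nu'|\leq|\tau|\,|\beta'|+|\beta|\,|\tau'|\leq C|\beta'|$. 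The slightly delicate point is that the stated bound is \emph{componentwise} ($|\tau_i'|+|\nu_i'|\lesssim|\beta_i'|$ for each $i$) rather than merely for the full vectors; I expect this to follow from the explicit structure of the formulas, in which each component of $\tau$ and $\nu$ depends only on a controlled subset of the $\beta_j$, but confirming the per-index bound — rather than a bound involving $|\beta'|=\bigl(\sum_j|\beta_j'|^2\bigr)^{1/2}$ — is the one place where I would have to look carefully at the chain rule and may need to absorb the harmless equivalence of $|\beta_i'|$-sums into the constant $C$. Finally, $\tau',\nu'\in L^2$ on each piece follows from $\beta'\in L^2$ and these pointwise bounds, giving $\tau,\nu\in\Omega(T)$.
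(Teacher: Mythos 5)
Your proposal is correct and follows essentially the same route as the paper: the authors likewise refine the partition so that a denominator of the form $\sqrt{1-|\beta^3|^2}=\sqrt{(\beta^1)^2+(\beta^2)^2}$ stays bounded below on each subinterval (they achieve this by making $\|\beta'\|_{L^1}$ small on each refined piece rather than by your two-chart uniform-continuity argument, but it is the same idea), then set $\nu=\bigl(-\beta^2,\beta^1,0\bigr)/\sqrt{1-|\beta^3|^2}$, obtain the third vector by a cross product, verify \eqref{TNB} and the derivative bound by the chain rule, and glue the pieces, accepting finitely many new jumps. The one point you flagged as delicate is moot: in the lemma the subscript $i$ in $|\tau_i'(t)|+|\nu_i'(t)|\leq C|\beta_i'(t)|$ indexes the restriction to the $i$-th subinterval of continuity (the paper's proof introduces ``$\beta_i(t)$, the restriction of $\beta(t)$ on $]t_{i-1},t_i[$''), not a vector component, so the full-vector bound $|\tau'|+|\nu'|\leq C|\beta'|$ that you prove is exactly what is required; indeed a genuine per-component bound would be false for this construction, since for instance the first component of your $\tau$ depends on both $\beta^1$ and $\beta^2$, and one can make $\beta^{1\prime}=0$ while $\tau^{1\prime}\neq0$.
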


\begin{proof}
In view of the definition of $\Omega(T)$ in \eqref{defomt},
we can find a partition of $[0,T]$ with $0=t_0<t_1<\cdots<t_n=T$
so that $\beta'(t)\in L^2(]t_{i-1},t_i[)$ and
\begin{equation}\label{L1beta'}
\|\beta'\|_{L^1(]t_{i-1},t_i[)}
\leq(t_i-t_{i-1})^{\f12}\|\beta'\|_{L^2(]t_{i-1},t_i[)}
<\f15.
\end{equation}
Let us denote $\beta_i(t)$ to be the restriction of $\beta(t)$ on $]t_{i-1},t_i[$.
Noticing that $\beta_i(t_{i-1})=(\beta_i^1,\beta_i^2,\beta_i^3)(t_{i-1})$ is a unit vector,
at least one of its component has absolute value less than $\f35$.
Without loss of generality, we may assume that $|\beta_i^3(t_{i-1})|<\f35$.
Then it follows from  \eqref{L1beta'} that $|\beta_i^3(t)|<\f45$ for every $t\in]t_{i-1},t_i[,$
and we  define
$$\nu_i(t)\eqdefa\Bigl(-\f{\beta_i^2}{\sqrt{1-|\beta_i^3|^2}},
\f{\beta_i^1}{\sqrt{1-|\beta_i^3|^2}},0\Bigr)(t),\andf
\tau_i(t)\eqdefa\nu_i(t)\times\beta(t),\quad\forall\ t\in[t_{i-1},t_i[.$$
Then for any $t\in]t_{i-1},t_i[$, it is easy to verify that
$\tau_i'(t)$ and $\nu_i'(t)$ exist  whenever $\beta'(t)$ exists
and satisfy $|\tau_i'(t)|+|\nu_i'(t)|\leq C|\beta_i'(t)|$.
Furthermore, it is easy to observe that
 $(\tau_i,\nu_i,\beta_i)$ satisfies \eqref{TNB} in $[t_{i-1},t_i[$.
As a result, by gluing $\tau_i$ (resp. $\nu_i$) together,
we get the desired vector $\tau$ (resp. $\nu$).
This completes the proof of this lemma.
\end{proof}

Thanks to Lemma \ref{lemTNB}, one has $\xi\times\beta
=\tau(\xi\cdot\nu)-\nu(\xi\cdot\tau)$. Then the anisotropic
Sobolev space $\dH^{s_1,s_2}_{\beta(t)}$ given by Definition \ref{XY}
can be equivalently reformulated as
$$\|a\|^2_{\dH^{s_1,s_2}_{\beta(t)}}\eqdefa\int_{\R^3}
\bigl(|\xi\cdot\tau(t)|^2+|\xi\cdot\nu(t)|^2\bigr)^{s_1}
|\xi\cdot\beta(t)|^{2s_2}|\wh a(\xi)|^2\,d\xi.$$
\smallskip

Next, we recall the following anisotropic dyadic operators:
\beq \label{dyadic} \begin{split}
&\Delta_k^{\beta^\perp}a\eqdef\cF^{-1}
\bigl(\varphi(2^{-k}|\xi\times\beta(t)|)\,\widehat{a}(\xi)\bigr),
\quad\Delta_\ell^{\beta}a \eqdef\cF^{-1}
 \bigl(\varphi(2^{-\ell}|\xi\cdot\beta(t)|)\,\widehat{a}(\xi)\bigr),\\
 &S_k^{\beta^\perp}a\eqdef\cF^{-1}
\bigl(\chi(2^{-k}|\xi\times\beta(t)|)\,\widehat{a}(\xi)\bigr),
\quad S_\ell^{\beta}a \eqdef\cF^{-1}
 \bigl(\chi(2^{-\ell}|\xi\cdot\beta(t)|)\,\widehat{a}(\xi)\bigr),
 \end{split}
 \eeq
where $\varphi, \chi: \R\rightarrow\R$
is a smooth function such that
\begin{align*}
&\Supp \varphi \subset \Bigl\{r \in \R\,: \, \frac34 \leq
r\leq \frac83 \Bigr\}\,,\quad\mbox{and}\quad \forall\
 r>0\,:\ \sum_{j\in\Z}\varphi(2^{-j}r)=1,\\
 &\Supp \chi \subset \Bigl\{r \in \R\,: \,
0\leq r\leq \frac43 \Bigr\}\,,\quad\mbox{and}\quad \forall\
 r\geq 0\,:\ \chi(r)+\sum_{j=0}^\infty\varphi(2^{-j}r)=1.
 \end{align*}

\begin{rmk}
One can check for instance \cite{BCD}
for the classical dyadic operators.
The only difference between the case here and the classical one
is that the operators defined by \eqref{dyadic} are anisotropic and  vary with time.
In the following, all the  literatures we cite involve only time-independent
functional spaces.
However, it is easy to verify that,
for any fixed time $t$, the time-dependent spaces
used in this paper share the same properties as the time-independent ones.
\end{rmk}

\begin{defi}\label{anibesov}
{\sl Let $p,\,q_1,\,q_2\in[1,\infty]$
and $s_1,\,s_2\in \R$. $(\dB^{s_1}_{p,q_1})_{\beta^\perp}(\dB^{s_2}_{p,q_2})_{\beta}$
denotes the anisotropic Besov space that
consists of $a\in{\mathcal S}'(\R^3)$ with
$\lim\limits_{j\rightarrow-\infty}
\|(S^{\beta^\perp}_j a,S^{\beta}_j a)\|_{L^\infty}=0$ such that
\begin{equation}\label{defBesov}
\|a\|_{(\dB^{s_1}_{p,q_1})_{\beta^\perp}
(\dB^{s_2}_{p,q_2})_{\beta}}
\eqdef\Bigl\|\Bigl(2^{ks_1}
\bigl\|\bigl(2^{\ell s_2}\|\Delta^{\beta^\perp}_k
\Delta_\ell^{\beta}a\|_{L^p(\R^3)}\bigr)
_{\ell\in\Z}\bigr\|_{\ell^{q_2}(\Z)}\Bigr)
_{k\in\Z}\Bigr\|_{\ell^{q_1}(\Z)}<\infty.
\end{equation}
}\end{defi}

We mention that the order of summation in \eqref{defBesov} is very important.
And we have the following Littlewood-Paley characterization of
the anisotropic Sobolev spaces:
\begin{equation}\label{LPsobolev}
\|a\|_{\dH^{s_1,s_2}_{\beta}}\thicksim
\|a\|_{(\dB^{s_1}_{2,2})_{\beta^\perp}(\dB^{s_2}_{2,2})_{\beta}}
,\quad\bigl(a,b\bigr)_{\dH^{s_1,s_2}_{\beta}}
\thicksim\sum_{k,\ell\in\Z}
2^{2ks_1}2^{2\ell s_2}\bigl(\dhk\dvl a,\dhk\dvl b\bigr)_{L^2}.
\end{equation}

In view of \eqref{defBesov} and \eqref{LPsobolev},
we get, by using H\"older's inequality, that

\begin{lem}\label{leminner}
{\sl For any $s_1,\,s_2\in\R$, and $q_1,\,q_2\in[1,\infty]$
with $q_1',\,q_2'$ being their conjugate numbers, we have
$$ \bigl|(a,b)_{L^2}\bigr|\leq\|a\|_{(\dB^{s_1}_{2,q_1})_{\beta^\perp}
(\dB^{s_2}_{2,q_2})_{\beta}}
\|b\|_{(\dB^{-s_1}_{2,q_1'})_{\beta^\perp}
(\dB^{-s_2}_{2,q_2'})_{\beta}}.$$
}\end{lem}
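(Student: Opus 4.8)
The plan is to establish the bilinear estimate in Lemma \ref{leminner} by decomposing both functions into their anisotropic dyadic pieces and applying Plancherel's identity together with a double H\"older inequality in the summation indices. First I would write, using Plancherel and the almost-orthogonality of the dyadic blocks, that
\begin{equation*}
(a,b)_{L^2}\thicksim\sum_{k,\ell\in\Z}
\bigl(\dhk\dvl a,\dhk\dvl b\bigr)_{L^2},
\end{equation*}
where the restriction to near-diagonal interactions in the frequency annuli (coming from the disjoint-support properties of $\varphi$) justifies replacing the full double sum by one over comparable indices up to a fixed shift. The point is that $\dhk\dvl a$ and $\dhkp\dvlp b$ are $L^2$-orthogonal unless $|k-k'|$ and $|\ell-\ell'|$ are both bounded by an absolute constant, so only a finite band of diagonal terms survives.

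Next I would insert the weights $2^{ks_1}2^{\ell s_2}$ and their inverses, writing each summand as
\begin{equation*}
\bigl(2^{ks_1}2^{\ell s_2}\|\dhk\dvl a\|_{L^2}\bigr)
\bigl(2^{-ks_1}2^{-\ell s_2}\|\dhk\dvl b\|_{L^2}\bigr),
\end{equation*}
and then apply H\"older's inequality in $\ell$ first, with exponents $q_2$ and $q_2'$, to pull out the inner $\ell^{q_2}$ and $\ell^{q_2'}$ norms over the $\beta$-direction; this respects the order of summation emphasized after Definition \ref{anibesov}. After this inner step I am left with a single sum over $k\in\Z$ of a product of two $k$-indexed sequences, namely the $\ell^{q_2}$ and $\ell^{q_2'}$ norms weighted by $2^{ks_1}$ and $2^{-ks_1}$ respectively, to which I apply H\"older once more in $k$ with exponents $q_1$ and $q_1'$. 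Recognizing the two resulting factors as exactly the Besov norms in Definition \ref{anibesov} yields the claimed bound.

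The step I expect to require the most care is the \emph{ordering} of the two H\"older applications and the bookkeeping of the near-diagonal shifts. Because the mixed Besov norm is built by taking the $\ell^{q_2}$ norm in $\ell$ \emph{inside} the $\ell^{q_1}$ norm in $k$, one must perform the $\ell$-summation before the $k$-summation; doing it in the wrong order would produce a different, generally larger, quantity. Likewise, when I collapse the double frequency sum to a diagonal band, the shifted indices $k'=k+O(1)$, $\ell'=\ell+O(1)$ must be absorbed using that $2^{\pm s_i O(1)}$ is a harmless constant and that a bounded shift leaves $\ell^{q}$ norms equivalent; this is routine but is where the constants enter. Everything else is a direct consequence of \eqref{defBesov}, \eqref{LPsobolev}, and Plancherel's theorem, so no additional input beyond the material already stated in the excerpt is needed.
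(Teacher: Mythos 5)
Your proposal is correct and is essentially the paper's own argument: the paper deduces this lemma in one line from \eqref{defBesov}, \eqref{LPsobolev} and H\"older's inequality, which is precisely your Plancherel-plus-near-diagonal decomposition followed by H\"older first in $\ell$ (exponents $q_2,q_2'$) and then in $k$ (exponents $q_1,q_1'$), in the order dictated by the definition of the mixed norm. The only caveat is that absorbing the bounded frequency shifts $k'=k+O(1)$, $\ell'=\ell+O(1)$ produces a harmless factor of order $2^{|s_1|+|s_2|}$, so strictly speaking one obtains the inequality with $\lesssim$ rather than with constant $1$, which is all that is ever used in the paper.
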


We also need the folowing anisotropic Bernstein inequalities from
\cite{CZ07, Pa02}:
\begin{lem}\label{lemBern}
{\sl Let $\cB_{\beta^\perp}$ (resp. $\cB_{\beta}$) be a ball of $\R^2_{\beta^\perp}$
(resp. $\R_{\beta}$),
and $\cC_{\beta^\perp}$ (resp. $\cC_{\beta}$) be a ring of $\R^2_{\beta^\perp}$
(resp. $\R_{\beta}$).
Let $1\leq p_2\leq p_1\leq\infty$ and $1\leq q_2\leq q_1\leq \infty$.
Then there hold:
\begin{align*}
&\text{if }~ \Supp \wh a\subset 2^k\cB_{\beta^\perp},~\text{then }~
\|\nablah^N a\|_{L^{p_1}_{\beta^\perp}(L^{q_1}_{\beta})}
\lesssim 2^{k\left(N+2\left(\f1{p_2}-\f1{p_1}\right)\right)}
\|a\|_{L^{p_2}_{\beta^\perp}(L^{q_1}_{\beta})};\\
&\text{if }~ \Supp\wh a\subset 2^\ell\cB_{\beta},~\text{then }~
\|\pa_{\beta}^N a\|_{L^{p_1}_{\beta^\perp}(L^{q_1}_{\beta})}
\lesssim 2^{\ell\left(N+\left(\f1{q_2}-\f1{q_1}\right)\right)}
\|a\|_{L^{p_1}_{\beta^\perp}(L^{q_2}_{\beta})};\\
&\text{if }~ \Supp\wh a\subset 2^k\cC_{\beta^\perp},~\text{then }~
\|a\|_{L^{p_1}_{\beta^\perp}(L^{q_1}_{\beta})}
\lesssim2^{-kN}\|\nablah^N a\|_{L^{p_1}_{\beta^\perp}(L^{q_1}_{\beta})};\\
&\text{if }~ \Supp\wh a\subset2^\ell\cC_{\beta},~\text{then }~
\|a\|_{L^{p_1}_{\beta^\perp}(L^{q_1}_{\beta})}
\lesssim 2^{-\ell N}\|\pa_{\beta}^N a\|_{L^{p_1}_{\beta^\perp}(L^{q_1}_{\beta})},
\end{align*}
where $\pa_{\beta}\eqdefa\beta\cdot\nabla$, and
$\nablah\eqdefa\nabla-\beta(\beta\cdot\nabla)
=\tau(\tau\cdot\nabla)+\nu(\nu\cdot\nabla)$
is the gradient in $\R^2_{\beta^\perp}$.
}\end{lem}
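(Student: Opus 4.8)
The plan is to freeze the time $t$ and reduce all four estimates to one-directional convolution inequalities in the orthonormal frame $(\tau(t),\nu(t),\beta(t))$ furnished by Lemma \ref{lemTNB}. Setting $y'=(x\cdot\tau,x\cdot\nu)\in\R^2_{\beta^\perp}$ and $y_3=x\cdot\beta\in\R_\beta$, the map $x\mapsto(y',y_3)$ is an orthogonal change of variables, hence preserves every $L^{p}_{\beta^\perp}(L^{q}_\beta)$ norm; moreover $\nablah$ becomes the full gradient $\nabla_{y'}$ in the two perpendicular variables and $\pa_\beta$ becomes $\pa_{y_3}$, while the Fourier support hypotheses turn into support conditions on the dual variables $\eta'=(\xi\cdot\tau,\xi\cdot\nu)$ and $\eta_3=\xi\cdot\beta$. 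It therefore suffices to prove the four inequalities in these fixed coordinates, exactly as in the classical anisotropic setting of \cite{CZ07,Pa02}.

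For the first inequality I would choose a smooth $\wt\varphi$ with compact support and $\wt\varphi\equiv1$ on the ball $\cB_{\beta^\perp}$, so that $\widehat a(\xi)=\wt\varphi(2^{-k}|\eta'|)\widehat a(\xi)$, which yields the representation $\nablah^N a=(\nablah^N h_k)*_{\beta^\perp}a$, where $h_k$ is the inverse Fourier transform in the perpendicular variables alone of $\wt\varphi(2^{-k}|\eta'|)$ and $*_{\beta^\perp}$ acts only on $y'$, the variable $y_3$ being a passive parameter. The scaling $h_k(y')=2^{2k}h(2^ky')$ gives $\|\nablah^N h_k\|_{L^r_{\beta^\perp}}\lesssim 2^{k(N+2(1-1/r))}$. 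Applying Young's inequality for the partial convolution $*_{\beta^\perp}$, performed with values in the Banach space $L^{q_1}_\beta$ (legitimate by Minkowski's integral inequality), with $1+\f1{p_1}=\f1r+\f1{p_2}$, i.e. $1-\f1r=\f1{p_2}-\f1{p_1}$, produces precisely the gain $2^{k(N+2(1/p_2-1/p_1))}$. The second inequality is identical, save that the convolution is now one-dimensional in the $\beta$ direction, replacing the planar scaling exponent $2(1/p_2-1/p_1)$ by the line exponent $1/q_2-1/q_1$.

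The two remaining inequalities, where $\widehat a$ is supported on a ring rather than a ball, follow by an inverse-derivative version of the same argument. For the third one I would take $\wt\varphi$ supported on an annulus and equal to $1$ on $\cC_{\beta^\perp}$, and recover $a$ from its $N$-th order perpendicular derivatives via the perpendicular kernel $g_k$ whose symbol is homogeneous of degree $-N$ on the ring (built from $\wt\varphi(2^{-k}|\eta'|)|\eta'|^{-N}$); since $\wt\varphi$ is supported away from the origin, $g_k$ is Schwartz in $y'$ and scales as $\|g_k\|_{L^1_{\beta^\perp}}\lesssim 2^{-kN}$, so Young's inequality with $r=1$ gives the factor $2^{-kN}$ with no loss of integrability. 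The fourth inequality is the one-dimensional analogue in the $\beta$ direction.

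The steps involving convolution kernels, their scaling, and Young's inequality are entirely routine once the coordinates are fixed; the only point requiring genuine care is the mixed-norm structure, namely that each convolution acts in only one group of variables and must be estimated while carrying the norm in the complementary variables as a Banach-space-valued target. This is handled uniformly by Minkowski's integral inequality, and the time dependence of $\beta$ plays no role here, since all computations are performed at a frozen time $t$, as explained in the preceding remark.
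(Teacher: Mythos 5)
Your proof is correct. The paper itself never proves this lemma --- it imports it from \cite{CZ07, Pa02}, relying on the remark after \eqref{dyadic} that at each frozen time $t$ the time-dependent operators behave exactly like the classical time-independent ones --- and your argument (rotation to the orthonormal frame of Lemma \ref{lemTNB}, rescaled cut-off kernels, Young's inequality for partial convolutions carried out in the vector-valued sense via Minkowski) is precisely the standard proof from those references, modulo the routine adjustment that if $\nablah^N$ denotes the full $N$-th order derivative tensor rather than $|\nablah|^N$, the ring case uses the decomposition $|\eta'|^{2N}=\sum_{|\alpha|=N}\frac{N!}{\alpha!}\,\eta'^{\,2\alpha}$ and kernels with symbols $\eta'^{\,\alpha}|\eta'|^{-2N}\wt\varphi(2^{-k}|\eta'|)$ instead of the single symbol $|\eta'|^{-N}\wt\varphi(2^{-k}|\eta'|)$.
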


In the following Lemmas \ref{leminter} and \ref{lemproduct},
we shall prove two useful inequalities.
It is worth mentioning that
the precise size of the constants in these inequalities will be crucial
in our proof of Theorem \ref{thm1}.

\begin{lem}\label{leminter}
{\sl For any $\eta\in[0,\f12[$ and $\s\in[0,\f14-\f\eta2[$, there holds
$$\|f\|_{(\dB^{1-\s}_{2,2})_{\beta^\perp}(\dB^{\f12-\eta}_{2,1})_\beta}
\lesssim \Bigl(\f12-2\s-\eta\Bigr)^{-1}\|\pb f\|_{L^2}^{2\s}
\|f\|_{\dH^{\f{3-6\s-2\eta}{2(1-2\s)}}}^{1-2\s}.$$
}\end{lem}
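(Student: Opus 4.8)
The plan is to prove this anisotropic interpolation inequality by decomposing $f$ into its anisotropic Littlewood-Paley pieces and carefully tracking the summation constants, since the precise dependence on $\frac12-2\s-\eta$ is the crux. Working from Definition \ref{anibesov}, I write
$$
\|f\|_{(\dB^{1-\s}_{2,2})_{\beta^\perp}(\dB^{\f12-\eta}_{2,1})_\beta}^2
=\sum_{k\in\Z}2^{2k(1-\s)}
\Bigl(\sum_{\ell\in\Z}2^{\ell(\f12-\eta)}
\|\dhk\dvl f\|_{L^2}\Bigr)^2 .
$$
The inner $\ell^1$-sum in $\ell$ is the only nontrivial place where a constant is generated, so I would handle it first. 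The idea is to split the geometric-type sum against two Sobolev-regularity gains: one supplied by the vertical derivative $\pb$ (giving a factor $2^{\ell}$) and one supplied by the full $\dH^s$ norm. Concretely, for each fixed frequency block I would bound $\|\dhk\dvl f\|_{L^2}$ in two ways, using $\|\pb\dhk\dvl f\|_{L^2}\sim 2^\ell\|\dhk\dvl f\|_{L^2}$ from the vertical Bernstein property (Lemma \ref{lemBern}) to control large $\ell$, and the plain $\dH^s$-norm to control the remaining range, then optimize.

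The key steps in order: first, I would apply a one-dimensional Hölder/interpolation argument in the $\ell$-variable to convert the $\ell^1(\Z)$ norm into a product of an $\ell^2$-type norm weighted by the $\dH^s$-scaling and a factor coming from $\|\pb f\|_{L^2}$. The natural exponents are dictated by matching homogeneities: the target regularity in the $\beta^\perp$ direction is $1-\s$, in the $\beta$ direction it is $\f12-\eta$, and these must interpolate between $\|\pb f\|_{L^2}$ (which carries one vertical derivative, i.e. regularity $(0,1)$) with weight $2\s$ and $\|f\|_{\dH^s}$ with weight $1-2\s$, where the exponent $s=\f{3-6\s-2\eta}{2(1-2\s)}$ is exactly the value forced by dimensional balance. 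I would verify this scaling count explicitly: the total regularity on the left is $(1-\s)+(\f12-\eta)=\f32-\s-\eta$, while the right side contributes $2\s\cdot 1+(1-2\s)\cdot s$ in the vertical count and the perpendicular counts must match $\R^3$ homogeneity; solving these linear relations reproduces the stated $s$.

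The main obstacle, and the reason the lemma is stated with the explicit constant $(\f12-2\s-\eta)^{-1}$, will be the summation of a geometric series whose ratio degenerates to $1$ precisely as $\s\to\f14-\f\eta2$ and $\eta\to\f12$. After splitting at an optimal threshold $\ell_0$ (chosen to balance the two bounds), I expect two geometric sums in $\ell$ with ratios $2^{\pm(\f12-2\s-\eta)}$; each sums to something comparable to $(\f12-2\s-\eta)^{-1}$, which is exactly where the announced constant emerges. The delicate point is ensuring that after taking the $\ell^2(\Z)$-norm in $k$ (via Minkowski's inequality, since $\ell^1$ in $\ell$ sits inside $\ell^2$ in $k$) the perpendicular summation remains clean and contributes no further divergent constant — this should follow because the perpendicular index $k$ carries an $\ell^2$ norm on both sides, so Cauchy--Schwarz in $k$ is lossless. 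I would finish by applying the Littlewood-Paley characterization \eqref{LPsobolev} to recognize the resulting expressions as $\|\pb f\|_{L^2}^{2\s}\|f\|_{\dH^s}^{1-2\s}$, taking care that the anisotropic embeddings \eqref{embeta} are used in the correct direction so that no extra loss appears.
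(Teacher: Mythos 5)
Your overall frame --- anisotropic Littlewood--Paley blocks, an interpolation step inside the $\ell$-sum, and attention to a near-critical geometric series --- matches the paper's, and your homogeneity count for $s=\f{3-6\s-2\eta}{2(1-2\s)}$ is correct. The genuine gap is in your core mechanism: for fixed $k$, bounding each block in two ways ($2^{-\ell}\|\pb\dhk\dvl f\|_{L^2}$ for large $\ell$, the $\dH^{s}$-information for small $\ell$), splitting at an ``optimal threshold $\ell_0$'' and optimizing, cannot produce the stated constant --- and this lemma is entirely about the constant. With the large-$\ell$ decay rate $\f12+\eta$ fixed by Bernstein, forcing the exponent of $\|\pb f\|_{L^2}$ to be exactly $2\s$ forces the small-$\ell$ growth rate to be $\f{\s(1+2\eta)}{1-2\s}$ --- not $\f12-2\s-\eta$ as you assert --- so the degenerate series contributes roughly $\bigl(\f{\s(1+2\eta)}{1-2\s}\bigr)^{-\f12}$, and the optimized constant behaves like $\s^{-\f{1-2\s}{2}}$, which blows up as $\s\to0$, whereas $\bigl(\f12-2\s-\eta\bigr)^{-1}$ stays bounded there. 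Equivalently, the fixed-$k$ inequality your reduction would need, $\sum_\ell2^{(\f12-\eta)\ell}c_\ell\lesssim\bigl(\f12-2\s-\eta\bigr)^{-1}\bigl(\sum_\ell2^{2\ell}c_\ell^2\bigr)^{\s}\bigl(\sum_\ell2^{2r\ell}c_\ell^2\bigr)^{\f{1-2\s}2}$ with $r=\f{\f12-2\s-\eta}{1-2\s}$, is false with a constant uniform in $\s$: at $\s=0$ it asserts that a weighted $\ell^2$ norm dominates the $\ell^1$ norm with the identical weight. This uniformity is not cosmetic: the lemma is applied with $\s\in]0,1/5]$, its constant feeds the $\f{C}{\s}$ coefficient of Proposition \ref{prop1}, and the iteration Lemma \ref{lem2.1} works only because that coefficient times the exponent increment $\f{\s}{1-\s}$ stays bounded; with a constant of size $\s^{-\f12}$ the coefficient becomes of order $\f{C}{\s^2}$ after Young's inequality and the whole scheme collapses.

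What actually works (the paper's route) is to extract the vertical-derivative factor pointwise in $(k,\ell)$ \emph{before} any frequency comparison: H\"older in $\ell$ with exponents $\f1\s,\f1{1-\s}$ applied to $2^{(\f12-\eta)\ell}\|\dhk\dvl f\|_{L^2}=\bigl(2^{2\ell}\|\dhk\dvl f\|_{L^2}^2\bigr)^{\s}\,2^{(\f12-2\s-\eta)\ell}\|\dhk\dvl f\|_{L^2}^{1-2\s}$, then H\"older in $k$ with exponents $\f1{2\s},\f1{1-2\s}$ --- not Cauchy--Schwarz or Minkowski, which is where your description of the $k$-step is off --- so that the first factor sums over both indices to $\|\pb f\|_{L^2}^{4\s}$. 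Only the remaining term is split, and the split is at $\ell=k$, a comparison of vertical against horizontal frequency rather than an optimized threshold. For $\ell\le k$ the series $\sum_{\ell\le k}2^{\f{\f12-2\s-\eta}{1-\s}\ell}\approx\f{1-\s}{\f12-2\s-\eta}\,2^{\f{\f12-2\s-\eta}{1-\s}k}$ is the sole source of the constant, and the transferred weight is absorbed by $\|f\|_{\dH^{s,0}_\beta}\le\|f\|_{\dH^{s}}$; for $\ell>k$ one inserts the factor $2^{k-\ell}$, the geometric ratio is bounded away from $1$, and the result is absorbed by $\|f\|_{\dH^{0,s}_\beta}\le\|f\|_{\dH^{s}}$. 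The horizontal regularity paying for the vertical $\ell^1$-summation in the region $\ell\le k$ is exactly the structural ingredient that your per-$k$ reduction discards.
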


\begin{proof}
By definition \ref{anibesov} and Lemma \ref{lemBern}, we have
\begin{equation}\begin{split}\label{3.6}
\|f&\|_{(\dB^{1-\s}_{2,2})_{\beta^\perp}(\dB^{\f12-\eta}_{2,1})_\beta}^2
=\sum_{k\in\Z}2^{2k(1-\s)}\Bigl(\sum_{\ell\in\Z}
2^{(\f12-\eta)\ell}\|\dhk\dvl f\|_{L^2}\Bigr)^2\\
&\lesssim\sum_{k\in\Z}2^{2k(1-\s)}
\Bigl(2^{2\ell}\sum_{\ell\in\Z}\|\dhk\dvl f\|_{L^2}^2\Bigr)^{2\s}
\Bigl(\sum_{\ell\in\Z}2^{\f{\f12-2\s-\eta}{1-\s}\ell}
\|\dhk\dvl f\|_{L^2}^{\f{1-2\s}{1-\s}}\Bigr)^{2(1-\s)}\\
&\lesssim\Bigl(\sum_{k,\ell\in\Z}2^{2\ell}
\|\dhk\dvl f\|_{L^2}^2\Bigr)^{2\s}\cA\\
&\lesssim\|\pb f\|_{L^2}^{4\s}\cA,
\end{split}\end{equation}
where
$$\cA\eqdef\Bigl\{\sum_{k\in\Z}2^{2k\f{1-\s}{1-2\s}}
\Bigl(\sum_{\ell\in\Z}2^{\f{\f12-2\s-\eta}{1-\s}\ell}
\|\dhk\dvl f\|_{L^2}^{\f{1-2\s}{1-\s}}\Bigr)^{\f{2(1-\s)}{1-2\s}}\Bigr\}^{1-2\s}.$$

By using the elementary inequality: $|a+b|^s\leq2^s(|a|^s+|b|^s),~\forall~s>0$,
we deduce
$$\cA\leq 2^{2(1-\s)}(\cA_1+\cA_2)\leq4(\cA_1+\cA_2) \with$$
\begin{align*}
&\cA_1\eqdef\Bigl\{\sum_{k\in\Z}2^{2k\f{1-\s}{1-2\s}}
\Bigl(\sum_{\ell\leq k}2^{\f{\f12-2\s-\eta}{1-\s}\ell}
\|\dhk\dvl f\|_{L^2}^{\f{1-2\s}{1-\s}}\Bigr)^{\f{2(1-\s)}{1-2\s}}\Bigr\}^{1-2\s},\\
&\cA_2\eqdef\Bigl\{\sum_{k\in\Z}2^{2k\f{1-\s}{1-2\s}}
\Bigl(\sum_{\ell>k}2^{\f{\f12-2\s-\eta}{1-\s}\ell}
\|\dhk\dvl f\|_{L^2}^{\f{1-2\s}{1-\s}}\Bigr)^{\f{2(1-\s)}{1-2\s}}\Bigr\}^{1-2\s}.
\end{align*}
Noticing that $\f{\f12-2\s-\eta}{1-\s}>0$,
and the operator $\dvl$ is $L^2$ bounded, we infer
\begin{equation}\begin{split}\label{3.7}
\cA_1&\lesssim\Bigl\{\sum_{k\in\Z}2^{2k\f{1-\s}{1-2\s}}\|\dhk f\|_{L^2}^2
\Bigl(\sum_{\ell\leq k}2^{\f{\f12-2\s-\eta}{1-\s}\ell}
\Bigr)^{\f{2(1-\s)}{1-2\s}}\Bigr\}^{1-2\s}\\
&\lesssim\Bigl(\f{1-\s}{\f12-2\s-\eta}\Bigr)^{2(1-\s)}
\Bigl(\sum_{k\in\Z}2^{k\f{3-6\s-2\eta}{1-2\s}}\|\dhk f\|_{L^2}^2\Bigr)^{1-2\s}.
\end{split}\end{equation}
While by using Plancherel's identity and \eqref{embeta}, we obtain
\begin{align*}
\sum_{k\in\Z}2^{k\f{3-6\s-2\eta}{1-2\s}}\|\dhk f\|_{L^2}^2
&=\sum_{k,\ell\in\Z}2^{k\f{3-6\s-2\eta}{1-2\s}}
\|\dhk\dvl f\|_{L^2}^2\\
&=\|f\|_{\dH^{\f{3-6\s-2\eta}{2(1-2\s)},0}}^2
\lesssim\|f\|_{\dH^{\f{3-6\s-2\eta}{2(1-2\s)}}}^2.
\end{align*}
By inserting the above estimate into \eqref{3.7}
and using $1-\s\in]\f34,1]$, we achieve
\begin{equation}\label{3.8}
\cA_1\lesssim \Bigl(\f12-2\s-\eta\Bigr)^{-2}
\|f\|_{\dH^{\f{3-6\s-2\eta}{2(1-2\s)}}}^{2(1-2\s)}.
\end{equation}

On the other hand, we have
\begin{equation}\begin{split}\label{3.9}
\cA_2&=\Bigl\{\sum_{k\in\Z}
\Bigl(\sum_{\ell>k}2^{\f{3-6\s-2\eta}{2(1-\s)}\ell}
\|\dhk\dvl f\|_{L^2}^{\f{1-2\s}{1-\s}}2^{k-\ell}
\Bigr)^{\f{2(1-\s)}{1-2\s}}\Bigr\}^{1-2\s}\\
&\lesssim\Bigl\{\sum_{k\in\Z}
\Bigl(\sum_{\ell>k}2^{\f{3-6\s-2\eta}{1-2\s}\ell}
\|\dhk\dvl f\|_{L^2}^2\Bigr)\Bigl(\sum_{\ell>k}2^{2(k-\ell)(1-\s)}
\Bigr)^{\f{1}{1-2\s}}\Bigr\}^{1-2\s}\\
&\lesssim\Bigl(\sum_{k,\ell\in\Z}
2^{\f{3-6\s-2\eta}{1-2\s}\ell}
\|\dhk\dvl f\|_{L^2}^2\Bigr)^{1-2\s}\\
&=\|f\|_{\dH^{0,\f{3-6\s-2\eta}{2(1-2\s)}}_\beta}^{2(1-2\s)}
\leq\|f\|_{\dH^{\f{3-6\s-2\eta}{2(1-2\s)}}}^{2(1-2\s)}.
\end{split}\end{equation}

By substituting \eqref{3.8} and \eqref{3.9} into \eqref{3.6},
we complete the proof of this lemma.
\end{proof}

Before proceeding, we recall Bony's
decomposition in the $\R^2_{\beta^\perp}$ variables from \cite{Bo81}:
\beq\label{bony}
\begin{split}
&ab=T^{\beta^\perp}_ab+T^{\beta^\perp}_ba+R^{\beta^\perp}(a,b)\with
T^{\beta^\perp}_ab\eqdef\sum_{k\in\Z}S^{\beta^\perp}_{k-1}a\,\D^{\beta^\perp}_k b,
\\
& R^{\beta^\perp}(a,b)
\eqdef\sum_{k\in\Z}\D^{\beta^\perp}_k a\,\wt{\D}^{\beta^\perp}_{k}b \andf
\wt{\Delta}^{\beta^\perp}_{k}
\eqdef\D^{\beta^\perp}_{k-1}+\D^{\beta^\perp}_k+\D^{\beta^\perp}_{k+1}.
\end{split}\eeq
And  Bony's decomposition in the $\R_{\beta}$ variable can be defined in the same way.

By applying  Bony's decompositions \eqref{bony}, we shall prove
the following law of product:

\begin{lem}\label{lemproduct}
{\sl For any $s_1,s_2<1$ with $s_1+s_2>0$, and
any $r_1,r_2<\f12$ with $r_1+r_2\geq0$, we use
$C_{s_1,s_2}$ and $C_{r_1,r_2}$ to denote $\max\bigl\{(1-s_1)^{-\f12},
(1-s_2)^{-\f12},(s_1+s_2)^{-\f12}\bigr\}$ and $\max\bigl\{(1-2r_1)^{-\f12},
(1-2r_2)^{-\f12}\bigr\}$ respectively.
Then we have
$$\|fg\|_{(\dB^{s_1+s_2-1}_{2,2})_{\beta^\perp}
(\dB^{r_1+r_2-\f12}_{2,\infty})_\beta}
\lesssim C_{s_1,s_2}C_{r_1,r_2}
\|f\|_{\dH^{s_1,r_1}_\beta}
\|g\|_{\dH^{s_2,r_2}_\beta}.$$
}\end{lem}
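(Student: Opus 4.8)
The plan is to expand the target norm through Definition \ref{anibesov}, so that up to squaring I must control, for each $k\in\Z$, the quantity $2^{2k(s_1+s_2-1)}\sup_{\ell\in\Z}2^{2\ell(r_1+r_2-\f12)}\|\dhk\dvl(fg)\|_{L^2}^2$ and then sum in $k$. To this end I would apply Bony's decomposition \eqref{bony} in the horizontal variables $\R^2_{\beta^\perp}$,
\[
fg=T^{\beta^\perp}_fg+T^{\beta^\perp}_gf+R^{\beta^\perp}(f,g),
\]
and then decompose each dyadic product further by a Bony decomposition in the vertical variable $\R_\beta$. Since the roles of $(f,s_1,r_1)$ and $(g,s_2,r_2)$ are symmetric, it suffices to treat the horizontal paraproduct $T^{\beta^\perp}_fg$ and the horizontal remainder $R^{\beta^\perp}(f,g)$, each combined with the three vertical pieces.

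The workhorse is H\"older's inequality in the form $\|ab\|_{L^2}\le\|a\|_{L^\infty_{\beta^\perp}(L^2_\beta)}\|b\|_{L^2_{\beta^\perp}(L^\infty_\beta)}$ (and its vertical analogue) together with the anisotropic Bernstein inequalities of Lemma \ref{lemBern}: a block of horizontal frequency $\sim2^{k'}$ costs a factor $2^{k'}$ in passing from $L^2_{\beta^\perp}$ to $L^\infty_{\beta^\perp}$ (the plane $\R^2_{\beta^\perp}$ being two-dimensional), and a block of vertical frequency $\sim2^{\ell'}$ costs $2^{\ell'/2}$ in passing from $L^2_\beta$ to $L^\infty_\beta$. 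In the horizontal paraproduct $T^{\beta^\perp}_fg=\sum_{k'}\Shkp f\,\dhkp g$, the operator $\dhk$ forces $k'\sim k$, and estimating the low-frequency factor $\Shkp f$ requires summing $\sum_{k''<k'}2^{k''}\|\Delta^{\beta^\perp}_{k''}f\|$; writing $2^{k''}=2^{k''(1-s_1)}2^{k''s_1}$ this geometric series converges precisely because $s_1<1$ and, after Cauchy--Schwarz, produces the factor $(1-s_1)^{-\f12}$ (and $(1-s_2)^{-\f12}$ from the symmetric term). In the horizontal remainder $R^{\beta^\perp}(f,g)$, instead $\dhk$ only forces $k'\ge k-C$, so one meets a tail $\sum_{k'\ge k}2^{(k-k')(s_1+s_2)}$ whose summability in the outer $\ell^2_k$ norm hinges on $s_1+s_2>0$ and yields $(s_1+s_2)^{-\f12}$. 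Together these give the constant $C_{s_1,s_2}$.

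The vertical variable is handled in the same spirit, but here the target only carries the weak third index, $(\dB^{r_1+r_2-\f12}_{2,\infty})_\beta$, i.e.\ one takes a supremum rather than a sum over $\ell$. For the vertical paraproducts the low-frequency factor $\Svlp(\cdot)$ forces, after Bernstein, the summation $\sum_{\ell''<\ell'}2^{\ell''(\f12-r_i)}$, which converges thanks to $r_i<\f12$ and contributes $(1-2r_i)^{-\f12}$, hence $C_{r_1,r_2}$. For the vertical remainder one is led, after the Bernstein gain $2^{\ell/2}$ and the weight $2^{\ell(r_1+r_2-\f12)}$, to the tail $\sum_{\ell'\ge\ell}2^{(\ell-\ell')(r_1+r_2)}\bigl(2^{\ell'r_1}\|\dvlp f\|\bigr)\bigl(2^{\ell'r_2}\|\wtdvlp g\|\bigr)$; since $r_1+r_2\ge0$ the exponential factor is at most one, and Cauchy--Schwarz over the full range of $\ell'$ already bounds the whole sum by the product of the two $\dH^{r_i}_\beta$-weighted sequences. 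Finally I would reassemble: the resulting bounds are Young/Cauchy--Schwarz estimates on the doubly-indexed sequences $\bigl(2^{k's_i}2^{\ell'r_i}\|\dhkp\dvlp f\|_{L^2}\bigr)_{k',\ell'}$ whose $\ell^2(\Z^2)$ norms are, by \eqref{LPsobolev}, comparable to $\|f\|_{\dH^{s_i,r_i}_\beta}$ and $\|g\|_{\dH^{s_i,r_i}_\beta}$, which closes the estimate.

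The step I expect to be the main obstacle is precisely the vertical endpoint $r_1+r_2=0$: there the tail series in the vertical remainder has exponential ratio one and is genuinely non-summable, so a naive argument would diverge. The point is that the weak target $\dB^{r_1+r_2-\f12}_{2,\infty}$ in the $\beta$ direction asks only for a uniform-in-$\ell$ bound, which one extracts by a single Cauchy--Schwarz on the full vertical sum; correspondingly one must never sum in $\ell$ (that would reintroduce the divergence), and the delicate bookkeeping is to keep the strong $\ell^2$-summation confined to the horizontal scale $k$ while carrying only $\ell^\infty$-control in the vertical scale $\ell$, all the while extracting exactly the advertised constants $C_{s_1,s_2}$ and $C_{r_1,r_2}$ from the four near-critical geometric series.
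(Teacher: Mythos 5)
Your proposal is correct and follows essentially the same route as the paper's proof: Bony decomposition in both the $\beta^\perp$ and $\beta$ variables, anisotropic Bernstein inequalities, and Cauchy--Schwarz/Young applied to the four near-critical geometric series to extract exactly the constants $(1-s_i)^{-\f12}$, $(s_1+s_2)^{-\f12}$, $(1-2r_i)^{-\f12}$, with only an $\ell^\infty$ bound kept in the vertical index $\ell$ --- the sole difference being organizational, since the paper first proves your vertical estimates as a standalone fiberwise one-dimensional product law \eqref{product1} and then inserts it into the horizontal paraproduct and remainder terms, rather than running the two decompositions simultaneously. In particular, your identification of the endpoint $r_1+r_2=0$ as the reason one must take a supremum rather than a sum over $\ell$ matches precisely how the paper handles the remainder estimate \eqref{3.13}.
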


\begin{proof}
{\bf Step 1.} Let us first show that for any smooth  functions $a$ and $b$, there holds
\begin{equation}\label{product1}
\sup_{\ell\in\Z}
2^{\left(r_1+r_2-\f12\right)\ell}\|\dvl(ab)\|_{L^2_\beta}
\lesssim C_{r_1,r_2}\|a\|_{\dH^{r_1}_\beta}\|b\|_{\dH^{r_2}_\beta}.
\end{equation}

Indeed for any $\ell\in\Z$, we get, by using Bony's decomposition \eqref{bony} in the $\R_{\beta}$ variable
and Lemma \ref{lemBern}, that
$$\|\dvl(ab)\|_{L^2_\beta}
\leq \|\dvl T^\beta_ab\|_{L^2_\beta}
+\|\dvl T^\beta_ba\|_{L^2_\beta}+2^{\f\ell2}\|\dvl R^\beta(ab)\|_{L^1_\beta}.$$
For the para-product part, we have
\begin{align*}
2^{\left(r_1+r_2-\f12\right)\ell}\|\dvl T^\beta_ab\|_{L^2_\beta}
&\leq2^{\left(r_1+r_2-\f12\right)\ell}\sum_{|\ell'-\ell|\leq4}
\|\Svlp a\|_{L^\infty_\beta}\|\dvlp b\|_{L^2_\beta}\\
&\lesssim\sup_{\ell'\in\Z}2^{\left(r_1+r_2-\f12\right)\ell'}
\|\Svlp a\|_{L^\infty_\beta}\|\dvlp b\|_{L^2_\beta}.
\end{align*}
While it follows from  Lemma \ref{lemBern} and the fact: $r_1<\f12,$ that
\begin{align*}
\|\Svlp a\|_{L^\infty_\beta}
\lesssim\sum_{\ell''\leq\ell'-2}2^{\f{\ell''}2}\|\dvlpp a\|_{L^2_\beta}
&\lesssim
\bigl(\sum_{\ell''\leq\ell'-2}2^{2r_1\ell''}\|\dvlpp a\|_{L^2_\beta}^2\bigr)^{\f12}
\bigl(\sum_{\ell''\leq\ell'-2}2^{(1-2 r_1)\ell''}\bigr)^{\f12}\\
&\lesssim(1-2r_1)^{-\f12} 2^{(\f12-r_1)\ell'}\|a\|_{\dH^{r_1}_\beta}.
\end{align*}
As a result,  for any $\ell\in\Z,$  we have
\begin{equation}\begin{split}\label{3.11}
2^{(r_1+r_2-\f12)\ell}\|\dvl T^\beta_ab\|_{L^2_\beta}
&\lesssim(1-2r_1)^{-\f12}\|a\|_{\dH^{r_1}_\beta}
\sup_{\ell'\in\Z}2^{r_2\ell'}\|\dvlp b\|_{L^2_\beta}\\
&\lesssim(1-2r_1)^{-\f12}
\|a\|_{\dH^{r_1}_\beta}\|b\|_{\dH^{r_2}_\beta}.
\end{split}\end{equation}
Exactly along the same line, we infer
\begin{equation}\label{3.12}
2^{(r_1+r_2-\f12)\ell}\|\dvl T^\beta_ab\|_{L^2_\beta}
\lesssim(1-2r_2)^{-\f12}
\|a\|_{\dH^{r_1}_\beta}\|b\|_{\dH^{r_2}_\beta}.
\end{equation}

Next, for the remainder term, we get, by first taking summation in $\ell$
and then using h\"older's inequality
as well as the fact: $r_1+r_2\geq0,$ that
\begin{equation}\begin{split}\label{3.13}
2^{(r_1+r_2)\ell}\|\dvl R^\beta(ab)\|_{L^1_\beta}
&\lesssim\sum_{\ell'\geq \ell-3}2^{(r_1+r_2)\ell'}\|\dvlp a\|_{L^2_\beta}
\|\wtdvlp b\|_{L^2_\beta}\\
&\lesssim\Bigl(\sum_{\ell'\in\Z}2^{2r_1\ell'}
\|\dvlp a\|_{L^2_\beta}^2\Bigr)^{\f12}
\Bigl(\sum_{\ell'\in\Z}2^{2r_2\ell'}\|\dvlp b\|_{L^2_\beta}^2\Bigr)^{\f12}\\
&\lesssim\|a\|_{\dH^{r_1}_\beta}\|b\|_{\dH^{r_2}_\beta}.
\end{split}\end{equation}

By combining the estimates \eqref{3.11}-\eqref{3.13}, we obtain
 \eqref{product1}.

\noindent{\bf Step 2.} By using Bony's decomposition in the $\R^2_{\beta^\perp}$ variables, we have
\begin{equation}\begin{split}\label{3.14}
\|fg\|_{(\dB^{s_1+s_2-1}_{2,2})_{\beta^\perp}
(\dB^{r_1+r_2-\f12}_{2,\infty})_\beta}^2
=&\sum_{k\in\Z}2^{2 k(s_1+s_2-1)}
\sup_{\ell\in\Z}2^{2\ell(r_1+r_2-\f12)}
\|\dhk\dvl (f g)\|_{L^2}^2\\
\lesssim&\sum_{k\in\Z}2^{2 k(s_1+s_2-1)}
\sup_{\ell\in\Z}2^{2\ell(r_1+r_2-\f12)}
\Bigl(\|\dhk\dvl T^{\beta^\perp}_f g\|_{L^2}^2\\
+&\|\dhk\dvl T^{\beta^\perp}_g f\|_{L^2}^2
+2^{2k}\|\dhk\dvl R^{\beta^\perp}(f,g)\|_{L^1_{\beta^\perp}(L^2_\beta)}^2\Bigr).
\end{split}\end{equation}
For any fixed $k$, we get, by using
\eqref{product1}, that
\begin{align*}
&\sup_{\ell\in\Z}
2^{2\ell(r_1+r_2-\f12)}\Bigl(\|\dhk\dvl T^{\beta^\perp}_f g\|_{L^2}^2
+\|\dhk\dvl T^{\beta^\perp}_g f\|_{L^2}^2\Bigr)\\
&=\sum_{|k'-k|\leq4}\int_{\R^2_{\beta^\perp}}\sup_{\ell\in\Z}
2^{2\ell(r_1+r_2-\f12)}\Bigl(\|\dvl (\Shkp f\dhkp g)\|_{L^2_\beta}^2
+\|\dvl(\Shkp g\dhkp f)\|_{L^2_\beta}^2\Bigr)\,dx_{\beta^\perp}\\
&\lesssim C_{r_1,r_2}^2\sum_{|k'-k|\leq4}
\Bigl(\|\Shkp f\|_{L^\infty_{\beta^\perp}(\dH^{r_1}_\beta)}^2
\|\dhkp g\|_{L^2_{\beta^\perp}(\dH^{r_2}_\beta)}^2
+\|\dhkp f\|_{L^2_{\beta^\perp}(\dH^{r_1}_\beta)}^2
\|\Shkp g\|_{L^\infty_{\beta^\perp}(\dH^{r_2}_\beta)}^2\Bigr).
\end{align*}
By multiplying the above inequality by $2^{2 k\left(s_1+s_2-1\right)}$
and then summing up the resulting inequalities for  $k\in\Z,$ we find
\begin{align*}
&\sum_{k\in\Z}2^{2 k(s_1+s_2-1)}\sup_{\ell\in\Z}
2^{2\ell(r_1+r_2-\f12)}\Bigl(\|\dhk\dvl T^{\beta^\perp}_f g\|_{L^2}^2
+\|\dhk\dvl T^{\beta^\perp}_g f\|_{L^2}^2\Bigr)\\
&\lesssim C_{r_1,r_2}^2\sum_{k\in\Z}2^{2 k(s_1+s_2-1)}
\Bigl(\|\Shk f\|_{L^\infty_{\beta^\perp}(\dH^{r_1}_\beta)}^2
\|\dhk g\|_{L^2_{\beta^\perp}(\dH^{r_2}_\beta)}^2\\
&\qquad\qquad\qquad\qquad\qquad\qquad+\|\dhk f\|_{L^2_{\beta^\perp}(\dH^{r_1}_\beta)}^2
\|\Shk g\|_{L^\infty_{\beta^\perp}(\dH^{r_2}_\beta)}^2\Bigr)\\
&\lesssim C_{r_1,r_2}^2\sum_{k\in\Z}2^{2 k(s_1+s_2-1)}
\Bigl(\sum_{k'\leq k-2}
2^{k'}\|\dhkp f\|_{L^2_{\beta^\perp}(\dH^{r_1}_\beta)}\Bigr)^2
\|\dhk g\|_{L^2_{\beta^\perp}(\dH^{r_2}_\beta)}^2\\
&\qquad+C_{r_1,r_2}^2\sum_{k\in\Z}2^{2 k(s_1+s_2-1)}
\|\dhk f\|_{L^2_{\beta^\perp}(\dH^{r_1}_\beta)}^2
\Bigl(\sum_{k'\leq k-2}
2^{k'}\|\dhkp g\|_{L^2_{\beta^\perp}(\dH^{r_2}_\beta)}\Bigr)^2.
\end{align*}
By applying H\"older's inequality and using the fact that $s_1<1$, we deduce for any $k\in\Z$ that
\begin{align*}
\Bigl(\sum_{k'\leq k-2}2^{k'}
\|\dhkp f\|_{L^2_{\beta^\perp}(\dH^{r_1}_\beta)}\Bigr)^2
&\lesssim
\Bigl(\sum_{k'\leq k-2}2^{2(1-s_1)k'}\Bigr)
\Bigl(\sum_{k'\leq k-2}
2^{2s_1k'}\|\dhkp f\|_{L^2_{\beta^\perp}(\dH^{r_1}_\beta)}^2\Bigr)\\
&\lesssim(1-s_1)^{-1} 2^{2k(1-s_1)}\|f\|_{\dH^{s_1,r_1}}^2.
\end{align*}
Similar estimate holds for $\Bigl(\sum\limits_{k'\leq k-2}
2^{k'}\|\dhkp g\|_{L^2_{\beta^\perp}(\dH^{r_2}_\beta)}\Bigr)^2$.
As a result, we obtain
\begin{equation}\begin{split}\label{3.15}
\sum_{k\in\Z}2^{2 k(s_1+s_2-1)}&\sup_{\ell\in\Z}
2^{2\ell(r_1+r_2-\f12)}\Bigl(\|\dhk\dvl T^{\beta^\perp}_f g\|_{L^2}^2
+\|\dhk\dvl T^{\beta^\perp}_g f\|_{L^2}^2\Bigr)\\
&\lesssim\max\bigl\{(1-s_1)^{-1},(1-s_2)^{-1}\bigr\}C_{r_1,r_2}^2
\|f\|_{\dH^{s_1,r_1}}^2\|g\|_{\dH^{s_2,r_2}}^2.
\end{split}\end{equation}

While for the remainder term,  we get, by using \eqref{product1} once again, that
\begin{equation}\begin{split}\label{3.16}
&\sum_{k\in\Z}2^{2 k(s_1+s_2)}\sup_{\ell\in\Z}
2^{2\ell(r_1+r_2-\f12)}
\|\dhk\dvl R^{\beta^\perp}(f,g)\|_{L^1_{\beta^\perp}(L^2_\beta)}^2\\
&\lesssim\sum_{k\in\Z}2^{2 k(s_1+s_2)}
\sup_{\ell\in\Z}
\Bigl(\sum_{k'\geq k-3}2^{\ell(r_1+r_2-\f12)}\|\dvl(\dhkp f\wtdhkp g)\|_{L^1_{\beta^\perp}(L^2_\beta)}\Bigr)^2\\
&\lesssim\sum_{k\in\Z}2^{2 k(s_1+s_2)}
\Bigl(\sum_{k'\geq k-3}C_{r_1,r_2}
\|\dhkp f\|_{L^2_{\beta^\perp}(\dH^{r_1}_\beta)}
\|\wtdhkp g\|_{L^2_{\beta^\perp}(\dH^{r_2}_\beta)}\Bigr)^2\\
&\lesssim C_{r_1,r_2}^2\sum_{k\in\Z}
\Bigl(\sum_{k'\geq k-3}2^{(k-k')(s_1+s_2)}
2^{k' s_1}\|\dhkp f\|_{L^2_{\beta^\perp}(\dH^{r_1}_\beta)}
2^{k' s_2}\|\wtdhkp g\|_{L^2_{\beta^\perp}(\dH^{r_2}_\beta)}\Bigr)^2.
\end{split}\end{equation}
By using Young's inequality, we obtain
\begin{align*}
&\sum_{k\in\Z}
\Bigl(\sum_{k'\geq k-3}2^{(k-k')(s_1+s_2)}
2^{k' s_1}\|\dhkp f\|_{L^2_{\beta^\perp}(\dH^{r_1}_\beta)}
2^{k' s_2}\|\wtdhkp g\|_{L^2_{\beta^\perp}(\dH^{r_2}_\beta)}\Bigr)^2\\
&\lesssim\Bigl(\sum_{k'\in\Z}2^{k' s_1}
\|\dhkp f\|_{L^2_{\beta^\perp}(\dH^{r_1}_\beta)}
2^{k' s_2}\|\wtdhkp g\|_{L^2_{\beta^\perp}(\dH^{r_2}_\beta)}\Bigr)^2
\Bigl(\sum_{k'\leq3}2^{2k'(s_1+s_2)}\Bigr)\\
&\lesssim(s_1+s_2)^{-1}\|f\|_{\dH^{s_1,r_1}}^2\|g\|_{\dH^{s_2,r_2}}^2.
\end{align*}
Inserting the above estimate into \eqref{3.16} yields
\begin{equation}\begin{split}\label{3.17}
\sum_{k\in\Z}2^{2 k(s_1+s_2)}
\sup_{\ell\in\Z}2^{2\ell(r_1+r_2-\f12)}
\|&\dhk\dvl R^{\beta^\perp}(f,g)\|_{L^1_{\beta^\perp}(L^2_\beta)}^2\\
&\lesssim(s_1+s_2)^{-1} C_{r_1,r_2}^2\|f\|_{\dH^{s_1,r_1}}^2\|g\|_{\dH^{s_2,r_2}}^2.
\end{split}\end{equation}

By substituting \eqref{3.15} and \eqref{3.17}
into \eqref{3.14}, we complete the proof of Lemma \ref{lemproduct}.
\end{proof}

\section{The proof of Theorem \ref{thm1}}\label{secthm1}

In this section, we shall present the proof of Theorem \ref{thm1}.

Let $\beta$ be given by Theorem \ref{thm1},
and the corresponding $\tau,~\nu\in\Omega(T^\ast)$
by Lemma \ref{lemTNB}.
We denote the jump discontinuity set of
$\tau,~\nu$ and $\beta$ in $]0,T^\ast[$ by
$\left\{T_1,\cdots,T_{n-1} \right\}$. Then it remains  to prove that
a strong solution $u$ to $(NS)$ can be extended beyond $T^\ast$ provided
\begin{equation}\label{4.1}
\sum_{i=1}^n\int_{T_{i-1}}^{T_i}\Bigl(|\tau'(t)|^2
+|\nu'(t)|^2+|\beta'(t)|^2+\|u(t)\cdot \beta(t)
\|_{\dH^{\f32}}^2\Bigr)\,dt<\infty.
\end{equation}
 Here we denote $T_0\eqdefa 0$ and $T_n\eqdefa T^\ast$.

Before preceding, let us introduce the following notations:
$$x_\beta\eqdef \beta\cdot x,\quad
x_{\beta^\perp}\eqdef x_\tau\tau+x_\nu\nu,\quad
u^\beta\eqdef \beta\cdot u,\quad u^{\beta^\perp}\eqdef u^\tau\tau+u^\nu\nu,
\quad\omega^\beta\eqdef\pa_\tau u^\nu-\pa_\nu u^\tau,$$
$$\pa_{\beta}\eqdef\beta\cdot\nabla,\quad
\nablah\eqdef\tau\pa_\tau+\nu\pa_\nu,\quad
\nablah^\perp\eqdef-\tau\pa_\nu+\nu\pa_\tau,\quad
\D_{\beta^\perp}\eqdef\pa_\tau^2+\pa_\nu^2.$$

Due to $\dive u=\pa_\tau u^\tau+\pa_\nu u^\nu+\pa_\beta u^\beta=0$, we have
$$\nablah\cdot\uh=-\pb\ub,\quad\nablah^\perp\cdot\uh=\om.$$
Then we have the following version of
Helmholtz decomposition for $\uh$:
\begin{equation}\label{4.2}
\uh=\ucurl+\udiv,\quad\mbox{with}\quad
\ucurl
\eqdef\nablah^\perp \Dh^{-1} \om
\andf \udiv\eqdef-\nablah\Dh^{-1}\pb\ub.
\end{equation}
As a result, we deduce from the equations of $(NS)$ that $\om$ and $\pb\ub$ verify
\begin{equation}\label{4.3}
\left\{
\begin{array}{l}
\pa_t\om-\tau'\cdot(\nabla u^\nu-\pa_\nu u)
-\nu'\cdot(\pa_\tau u-\nabla u^\tau)
+u\cdot\nabla\om-\D\om\\
\qquad\qquad
=\pb\ub\om-\pb\uh\cdot\nablah\ub,\\
\pa_t \pb\ub-\beta'\cdot(\pa_\beta u+\nabla\ub)
+u\cdot\nabla\pb\ub-\D\pb\ub+\pb u\cdot\nabla \ub=-\pb^2P.
\end{array}
\right.
\end{equation}

On the other hand, it follows from the rotational symmetry that
$$u\cdot\nabla=u^\tau\pa_\tau
+u^\nu\pa_\nu+u^\beta\pa_\beta,\andf
\D=\pa_\tau^2+\pa_\nu^2+\pa_\beta^2.$$
So that we represent the pressure function $P$ as
\begin{equation}\label{4.4}
P=-\D^{-1}\Bigl(\sum\limits_{\ell,m\in\{\tau,\nu,\beta\}}
\pa_\ell u^m\pa_m u^\ell\Bigr).
\end{equation}

Let us first focus on the estimate of the solution to \eqref{4.3} on the first time interval $[0,T_1]$.
The following {\it a priori} estimate to \eqref{4.3} will
play a key role in our proof of Theorem \ref{thm1}, whose proof will be
postponed to Section \ref{secprop1}.

\begin{prop}\label{prop1}
{\sl Let $u$  be a strong solution to $(NS)$ on $[0,T[$ for some $T>0$.
Then for any $t\in[0,T[$ and any $\s\in\left]0, 1/5\right],$  there holds
\begin{equation}\begin{split}\label{ineqprop1}
\f{d}{dt}&\bigl(\|\om\|_{L^2}^2+\|\pb\ub\|_{L^2}^2\bigr)
+\|\nabla\om\|_{L^2}^2+\|\nabla\pb\ub\|_{L^2}^2
\leq C\bigl(\|\om\|_{L^2}^2+\|\pb\ub\|_{L^2}^2\bigr)\|\ub\|_{\dH^{\f32}}^2\\
&+C\|u_0\|_{L^2}^2\bigl(|\tau'|^2+|\nu'|^2+|\beta'|^2\bigr)
+\f{C}{\s}\bigl(\|\om\|_{L^2}^{\f{2}{1-\s}}+\|\pb\ub\|_{L^2}^{\f{2}{1-\s}}\bigr)
\|\ub\|_{\dH^{\f32}}^{\f{2(1-2\s)}{1-\s}}\|\nabla u\|_{L^2}^{\f{2\s}{1-\s}}.
\end{split}\end{equation}
}\end{prop}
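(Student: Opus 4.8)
The plan is to run a coupled $L^2$ energy estimate on the system \eqref{4.3}. I would take the $L^2$ inner product of the first equation with $\om$ and of the second with $\pb\ub$, and add the two identities. The time derivatives produce $\f12\f{d}{dt}\bigl(\|\om\|_{L^2}^2+\|\pb\ub\|_{L^2}^2\bigr)$, the viscous terms $-\D\om$ and $-\D\pb\ub$ produce the dissipation $\|\nabla\om\|_{L^2}^2+\|\nabla\pb\ub\|_{L^2}^2$, and the transport terms $(u\cdot\nabla\om,\om)_{L^2}$ and $(u\cdot\nabla\pb\ub,\pb\ub)_{L^2}$ both vanish since $\dive u=0$. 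It then remains to control three groups of terms: the rotation terms carrying $\tau'$, $\nu'$, $\beta'$; the genuinely nonlinear terms $(\pb\ub\,\om,\om)_{L^2}$, $(\pb\uh\cdot\nablah\ub,\om)_{L^2}$, $(\pb u\cdot\nabla\ub,\pb\ub)_{L^2}$; and the pressure term $(\pb^2P,\pb\ub)_{L^2}$.

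For the rotation terms, such as $\tau'\cdot\int(\nabla u^\nu-\pa_\nu u)\,\om\,dx$, I would integrate by parts to shift the derivative off $u$ and onto $\om$, turning each into an expression of the form $\mp\tau'\cdot\int u^\nu\,\nabla\om\,dx$. Cauchy--Schwarz, the basic energy bound $\|u(t)\|_{L^2}\leq\|u_0\|_{L^2}$, and Young's inequality then bound it by $C|\tau'|^2\|u_0\|_{L^2}^2+\f18\|\nabla\om\|_{L^2}^2$, the last term being absorbed by the dissipation. Treating the $\nu'$- and $\beta'$-terms in the same way produces exactly the contribution $C\|u_0\|_{L^2}^2\bigl(|\tau'|^2+|\nu'|^2+|\beta'|^2\bigr)$.

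The heart of the matter is the nonlinear group together with the pressure. Here I would first use the Helmholtz decomposition \eqref{4.2} to express $\uh$ through $\om$ and $\pb\ub$, and expand $\pb^2P$ from \eqref{4.4}, splitting every product according to whether a factor carries the controlled component $\ub$ or only $\uh$; note that $\D^{-1}\pb^2$ is a bounded zeroth-order Fourier multiplier. Each resulting trilinear integral is then estimated by the duality Lemma \ref{leminner}, followed by the product law Lemma \ref{lemproduct} applied to one pair of factors, and crucially by the interpolation Lemma \ref{leminter} applied to the $\ub$-factor with $\eta$ chosen so that the Sobolev index $\f{3-6\s-2\eta}{2(1-2\s)}$ equals $\f32$, thereby extracting precisely the controlled norm $\|\ub\|_{\dH^{\f32}}$. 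The decisive feature is that the interpolation constant $(\f12-2\s-\eta)^{-1}\sim\s^{-1}$ is what produces the prefactor $\f{C}{\s}$; after a final Young's inequality that absorbs a fraction of $\|\nabla\om\|_{L^2}^2+\|\nabla\pb\ub\|_{L^2}^2$ into the dissipation and uses $\|\pb\ub\|_{L^2}\leq\|\nabla u\|_{L^2}$, the exponents rearrange into the scaling-invariant term $C\bigl(\|\om\|_{L^2}^2+\|\pb\ub\|_{L^2}^2\bigr)\|\ub\|_{\dH^{\f32}}^2$ and the $\s$-dependent term displayed in \eqref{ineqprop1}.

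The main obstacle is precisely this last step: organizing the anisotropic Littlewood--Paley estimates of the many nonlinear and pressure contributions so that each yields exactly one factor of the controlled norm $\|\ub\|_{\dH^{\f32}}$, while the remaining factors reduce to the energy $\|\om\|_{L^2}^2+\|\pb\ub\|_{L^2}^2$ and a portion of the dissipation to be absorbed, and above all so that the dependence on $\s$ is kept \emph{sharp}. It is essential to track the actual size $\s^{-1}$ of the constants in Lemmas \ref{leminter} and \ref{lemproduct}, together with the precise powers $\f{2}{1-\s}$, $\f{2(1-2\s)}{1-\s}$ and $\f{2\s}{1-\s}$, rather than merely their finiteness, because this exact form is what will later allow Corollary \ref{cor2.1} to close the global-in-time bound on $\|\om\|_{L^2}^2+\|\pb\ub\|_{L^2}^2$.
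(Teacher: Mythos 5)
Your plan is essentially the paper's own proof: the same coupled $L^2$ estimate on \eqref{4.3}, the same integration by parts plus energy bound $\|u(t)\|_{L^2}\le\|u_0\|_{L^2}$ for the $\tau',\nu',\beta'$ terms, and for the critical trilinear terms the same chain of Lemma \ref{leminner}, Lemma \ref{lemproduct} and Lemma \ref{leminter} with $\eta=0$ (your requirement $\f{3-6\s-2\eta}{2(1-2\s)}=\f32$ forces exactly $\eta=0$), closed by Young's inequality. Two small imprecisions: only the terms containing $\pb\uh\cdot\nablah\ub$ (the paper's $I_3$ and $II_3$) need the anisotropic machinery --- the terms $\int_{\R^3}\pb\ub|\om|^2dx$ and the remaining pressure contributions are handled by plain Sobolev embedding and Riesz-transform boundedness, and they are what produce $C\bigl(\|\om\|_{L^2}^2+\|\pb\ub\|_{L^2}^2\bigr)\|\ub\|_{\dH^{\f32}}^2$; also the factor ending up as $\|\nabla u\|_{L^2}^{\f{2\s}{1-\s}}$ is $\|\pb\uh\|_{L^2}$, not $\|\pb\ub\|_{L^2}$.

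There is, however, one concrete error in your constant bookkeeping, and it concerns precisely the point you rightly single out as the crux. With $\eta=0$ and $\s\in]0,1/5]$, the constant in Lemma \ref{leminter} is $\bigl(\f12-2\s\bigr)^{-1}\le 10$: it is \emph{bounded}, not $\sim\s^{-1}$, so it is not what produces $\f C\s$. The $\s$-singularity actually enters through Lemma \ref{lemproduct}: estimating $(\pb\uh)\,\om$ in $(\dB^{\s}_{2,2})_{\beta^\perp}(\dB^{-\f12}_{2,\infty})_\beta$ with indices $s_1=1-\s$ on $\pb\uh$ and $s_2=2\s$ on $\om$ gives $C_{s_1,s_2}=(1-s_1)^{-\f12}=\s^{-\f12}$, so the trilinear term carries a factor $\f{1}{\sqrt\s}$; Young's inequality with exponents $\f{2}{1+\s}$ and $\f{2}{1-\s}$ (absorbing $(\|\nabla\om\|_{L^2}+\|\nabla\pb\ub\|_{L^2})^{1+\s}$ into the dissipation) then inflates this to $\s^{-\f{1}{1-\s}}\le 5^{\f14}\s^{-1}$, which is the $\f C\s$ in \eqref{ineqprop1}. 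The distinction is not cosmetic: if the interpolation step really cost an extra $\s^{-1}$ on top of the product law's $\s^{-\f12}$, the final prefactor would be of order $\s^{-\f{3}{1-\s}}$, and a bound of the form $\f{d}{dt}F\le\f{C}{\s^{3}}F^{1+\f{\s}{1-\s}}\phi$ is outside the scope of Lemma \ref{lem2.1} and Corollary \ref{cor2.1}, whose iteration needs the coefficient and the exponent to degenerate at matching rates ($M/\s_k$ against $1+\s_k$); the Gronwall bootstrap in its proof would give $f^{\s_k}\lesssim f_0^{\s_k}e^{1/(2\s_k^2)}$ and fail. Fortunately, the lemmas applied exactly as you propose do deliver the sharp constants, so the error lies in your narrative rather than in the toolchain, and the plan as a whole goes through.
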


Now let us denote $F(t)\eqdef\|\om(t)\|_{L^2}^2+\|\pb\ub(t)\|_{L^2}^2$.
Then \eqref{ineqprop1} implies
\begin{align*}
\f{d}{dt} F(t)\leq& C\f{1-\s}{\s}F(t)^{1+\f{\s}{1-\s}}
\|\ub\|_{\dH^{\f32}}^{\f{2(1-2\s)}{1-\s}}\|\nabla u\|_{L^2}^{\f{2\s}{1-\s}}\\
&+C\|\ub(t)\|_{\dH^{\f32}}^2F(t)
+C\|u_0\|_{L^2}^2\bigl(|\tau'|^2+|\nu'|^2+|\beta'|^2\bigr),
\quad\forall\ \s\in]0, 1/5].
\end{align*}
In view of \eqref{4.1}, and the energy inequality for the solution $u$ of $(NS):$
\begin{equation}\label{energyineq}
\|u\|_{L^\infty_t(L^2)}^2+2\|\nabla u\|_{L^2_t(L^2)}\leq\|u_0\|_{L^2}^2,
\quad\forall\ t>0,
\end{equation}
we deduce for any $t<T_1$ that
$$\int_0^t\|\ub(s)\|_{\dH^{\f32}}^{\f{2(1-2\s)}{1-\s}}
\|\nabla u(s)\|_{L^2}^{\f{2\s}{1-\s}}\,ds
\leq\|\ub\|_{L^2_t(\dH^{\f32})}^{\f{2(1-2\s)}{1-\s}}
\|\nabla u\|_{L^2_t(L^2)}^{\f{2\s}{1-\s}}<\infty.$$
Hence we get, by using Corollary \ref{cor2.1}, that
\begin{equation}\label{4.7}
F(t)=\|\om(t)\|_{L^2}^2+\|\pb\ub(t)\|_{L^2}^2\leq \wt L_1
\end{equation}
for some positive constant $\wt L_1$ depending only on
$$\|u_0\|_{H^1},\quad\|(\tau',\nu',\beta')\|_{L^2([0,T_1[)},
\andf \|\ub\|_{L^2_{T_1}(\dH^{\f32})}.$$
By using the Helmholtz decomposition \eqref{4.2}, and
substituting the estimate \eqref{4.7} into the right-hand side of \eqref{ineqprop1},
and then integrating the resulting inequality over $[0,t],$
we achieve
\begin{equation}\begin{split}\label{4.8}
\|\nablah\uh\|_{L^\infty_t(L^2)\cap L^2_t(\dH^1)}^2
&\leq C\|(\om,\pb\ub)\|_{L^\infty_t(L^2)\cap L^2_t(\dH^1)}^2\\
&\leq C\wt L_1\|\ub\|_{L^2_{T_1}(\dH^{\f32})}^2
+C\|u_0\|_{L^2}^2\|(\tau',\nu',\beta')\|_{L^2([0,T_1[)}^2\\
&\quad+\f{C}{\s}\wt L_1^{\f{1}{1-\s}}
\|\ub\|_{L^2_{T_1}(\dH^{\f32})}^{\f{2(1-2\s)}{1-\s}}
\|u_0\|_{L^2}^{\f{2\s}{1-\s}}\eqdef\bar L_1,\quad\forall\ t<T_1.
\end{split}\end{equation}

With \eqref{4.8} at hand, we are  now in a position to complete the proof of Theorem \ref{thm1}.

\begin{proof}[Proof of Theorem \ref{thm1}]
By applying the space gradient $\nabla$ to $(NS),$ we find
\begin{equation}\label{6.1}
\pa_t\nabla u+u\cdot\nabla(\nabla u)+\nabla u\cdot\nabla u
-\D\nabla u+\nabla^2 P=0.
\end{equation}

By using integration by parts and the divergence-free condition of $u,$
we obtain
$$\int_{\R^3}\Bigl(u\cdot\nabla(\nabla u)
+\nabla^2 P\Bigr):(\nabla u)\,dx
=-\int_{\R^3}\Bigl(\f12|\nabla u|^2\dive u
+(\nabla\dive u)\cdot\nabla P\Bigr)\,dx=0.$$
So that we get, by taking $L^2$ inner product of \eqref{6.1} with $\nabla u,$ that
\begin{equation}\label{6.2}
\f12\f{d}{dt}\|\nabla u\|_{L^2}^2+\|\nabla^2 u\|_{L^2}^2
=-\int_{\R^3}\cB\,dx,
\with \cB=\sum_{\ell,m\in\{\tau,\nu,\beta\}}
\bigl(\pa_\ell u\cdot\nabla u^m\bigr)\pa_\ell u^m.
\end{equation}

It is crucial to notice that all the terms
in $\cB$ are of the form
$\nabla u\otimes\nabla u\otimes\nablah\uh.$
In fact, this is obvious the case when ${\ell,m\in\{\tau,\nu\}}$.
While when $\ell=m=\beta$, due to $\dive u=0,$ we have
$$\bigl(\pa_\beta u\cdot\nabla \ub\bigr)\pa_\beta \ub
=-\bigl(\pa_\beta u\cdot\nabla \ub\bigr)\bigl(\nablah\cdot\uh\bigr).$$
When $\ell=\beta$ and $m\in\{\tau,\nu\}$, we have
\begin{align*}
\bigl(\pb u\cdot\nabla \uh\bigr)\cdot\pb \uh
&=\bigl(\pb \uh\cdot\nablah \uh+\pb \ub\pb \uh\bigr)\cdot\pb \uh\\
&=\bigl(\pb \uh\cdot\nablah \uh-(\nablah\cdot\uh)\pb \uh\bigr)\cdot\pb \uh.
\end{align*}
When $m=\beta$ and $\ell\in\{\tau,\nu\}$, we have
\begin{align*}
\bigl(\nablah u\cdot\nabla \ub\bigr)\cdot\nablah \ub
&=\bigl(\nablah \uh\cdot\nablah \ub+\nablah \ub\pb \ub\bigr)\cdot\nablah \ub\\
&=\bigl(\nablah \uh\cdot\nablah \ub-\nablah\ub(\nablah\cdot\uh)\bigr)\cdot\nablah \ub.
\end{align*}
As a result, the right-hand side of \eqref{6.2} can be handled as follows
\begin{equation}\begin{split}\label{6.3}
\Bigl|\int_{\R^3}\cB\,dx\Bigr|
&\leq C\|\nabla u\|_{L^2}\|\nabla u\|_{L^6}\|\nablah\uh\|_{L^3}\\
&\leq C\|\nabla u\|_{L^2}\|\nabla^2 u\|_{L^2}
\|\nablah\uh\|_{L^2}^{\f12}\|\nabla\nablah\uh\|_{L^2}^{\f12}\\
&\leq\f12\|\nabla^2 u\|_{L^2}^2+C\|\nabla u\|_{L^2}^2
\|\nablah\uh\|_{L^2}\|\nabla\nablah\uh\|_{L^2}.
\end{split}\end{equation}

By substituting \eqref{6.3} into \eqref{6.2},
and then using Gronwall's inequality together with the estimates
\eqref{energyineq} and \eqref{4.8}, we get for any $t<T_1$ that
\begin{equation}\begin{split}\label{ineqprop2}
\|\nabla u\|_{L^\infty_t(L^2)}^2+\|\nabla^2 u\|_{L^2_t(L^2)}^2
&\leq \|\nabla u_0\|_{L^2}^2\exp\Bigl(C\|\nabla u\|_{L^2_t(L^2)}
\|\nabla\nablah\uh\|_{L^2_t(L^2)}\Bigr)\\
&\leq \|\nabla u_0\|_{L^2}^2\exp\Bigl(C\|u_0\|_{L^2}
\bar L_1^{\f12}\Bigr)\eqdef L_1.
\end{split}\end{equation}

Thanks to \eqref{ineqprop2}, we deduce from the classical
well-posedness theory for the system $(NS)$ in $H^1$ that,
$u$  can be extended to be a strong solution of $(NS)$
at least on $[0,T_1+C L_1^{-2}]$, and there holds
\begin{equation}\label{4.13}
\|\nabla u\|_{L^\infty_t(L^2)}^2+\|\nabla^2 u\|_{L^2_t(L^2)}^2
\leq2L_1,\quad\forall\ t\in[0,T_1+C L_1^{-2}].
\end{equation}

In particular, the estimate \eqref{4.13} ensures that $\|\nabla u(T_1)\|_{L^2}^2\leq2L_1$.
Then we can view $T_1$ as our new initial time, and solve $(NS)$
on $[T_1,T_2[$. Then along the same line to the proof of \eqref{4.13}, we find that $u$ exists
at least on $[0,T_2+C L_2^{-2}]$ with $\|\nabla u(T_2)\|_{L^2}^2\leq2L_2$
for some constant $L_2>0$.

By repeating the above procedure for $n-2$ more times, we conclude
that $u$  can actually be extended beyond the time $T_n=T^\ast$
with lifespan no less than $C L_n^{-2}$ for some constant $L_n>0$.
This completes the proof of Theorem \ref{thm1}.
\end{proof}

\section{The proof of Proposition \ref{prop1}}\label{secprop1}

The aim of this section is to present the proof of Proposition \ref{prop1}.

\begin{proof}[Proof of Proposition \ref{prop1}]
By taking $L^2$ inner product of the first  equation in \eqref{4.3}
with $\om$, we get
\begin{equation}\begin{split}\label{5.1}
\f12\f{d}{dt}\|\om\|_{L^2}^2+&\|\nabla\om\|_{L^2}^2
=\int_{\R^3}\Bigl(\tau'\cdot(\nabla u^\nu-\pa_\nu u)
+\nu'\cdot(\pa_\tau u-\nabla u^\tau)\Bigr)\om\,dx\\
&+\int_{\R^3} \pb\ub |\om|^2\,dx
-\int_{\R^3}\bigl(\pb\uh\cdot\nablah\ub\bigr) \om\,dx
\eqdef I_1+I_2+I_3.
\end{split}\end{equation}

For $I_1$, by using integration by parts,
and the fact: $|\tau|=|\nu|=1$, we get
\begin{equation}\begin{split}\label{5.2}
|I_1|&=\Bigl|\int_{\R^3}\Bigl(\tau'\cdot(u^\nu\nabla\om-u\pa_\nu\om)
+\nu'\cdot(u\pa_\tau\om- u^\tau\nabla\om)\Bigr)\,dx\Bigr|\\
&\leq2\bigl(|\tau'|+|\nu'|\bigr)
\|u\|_{L^2}\|\nabla \om\|_{L^2}\\
&\leq\f18\|\nabla \om\|_{L^2}^2
+C\|u_0\|_{L^2}^2\bigl(|\tau'|^2+|\nu'|^2\bigr),
\end{split}\end{equation}
where in the last step, we used the energy inequality \eqref{energyineq}.

While for $I_2,$  we get, by using Sobolev embedding theorem
and $|\beta|=1,$ that
\begin{equation}\begin{split}\label{5.3}
|I_2|&\leq\|\pb\ub\|_{L^3}\|\om\|_{L^2}\|\om\|_{L^6}\\
&\leq\f18\|\nabla\om\|_{L^2}^2+C\|\om\|_{L^2}^2\|\nabla\ub\|_{\dH^{\f12}}^2.
\end{split}\end{equation}

For the most troublesome term $I_3$,
by using Lemma \ref{leminner}, Lemma \ref{leminter} for $\eta=0$
and Lemma \ref{lemproduct}, for any $\s\in]0, 1/5]$, we deduce  that
\begin{align*}
|I_3|&\leq\|\nablah\ub\|_{(\dB^{-\s}_{2,2})_{\beta^\perp}(\dB^{\f12}_{2,1})_\beta}
\|(\pb\uh)\om\|_{(\dB^{\s}_{2,2})_{\beta^\perp}(\dB^{-\f12}_{2,\infty})_\beta}\\
&\lesssim\f{1}{\sqrt\s}\|\ub\|_{(\dB^{1-\s}_{2,2})_{\beta^\perp}(\dB^{\f12}_{2,1})_\beta}
\|\pb\uh\|_{\dH^{1-\s,0}_\beta}\|\om\|_{\dH^{2\s,0}_\beta}\\
&\lesssim\f{1}{\sqrt\s}\|\pb\ub\|_{L^2}^{2\s}
\|\ub\|_{\dH^{\f32}}^{1-2\s}
\|\pb\uh\|_{L^2}^\s\|\nablah\pb\uh\|_{L^2}^{1-\s}
\|\om\|_{L^2}^{1-2\s}\|\nablah\om\|_{L^2}^{2\s}.
\end{align*}
Whereas by using the Helmholtz decomposition \eqref{4.2},
the $L^2$ boundness for double Riesz transform,
and the fact: $|\beta|=1$, we have
$$\|\nablah\pb\uh\|_{L^2}\lesssim \|\pb\om\|_{L^2}
+\|\pb^2\ub\|_{L^2}\leq\|\nabla\om\|_{L^2}+\|\nabla\pb\ub\|_{L^2}.$$
As a result, it comes out
\begin{equation}\begin{split}\label{5.4}
|I_3|&\leq\f{C}{\sqrt\s}\|\pb\ub\|_{L^2}^{2\s}\|\om\|_{L^2}^{1-2\s}
\bigl(\|\nabla\om\|_{L^2}+\|\nabla\pb\ub\|_{L^2}\bigr)^{1+\s}
\|\ub\|_{\dH^{\f32}}^{1-2\s}\|\pb\uh\|_{L^2}^\s\\
&\leq\f18\bigl(\|\nabla\om\|_{L^2}^2+\|\nabla\pb\ub\|_{L^2}^2\bigr)
+\f{C}{\s}\bigl(\|\om\|_{L^2}^{\f{2}{1-\s}}+\|\pb\ub\|_{L^2}^{\f{2}{1-\s}}\bigr)
\|\ub\|_{\dH^{\f32}}^{\f{2(1-2\s)}{1-\s}}\|\nabla u\|_{L^2}^{\f{2\s}{1-\s}},
\end{split}\end{equation}
where in the last step, we  used the elementary inequality that
$$\bigl(\f{1}{\sqrt\s}\bigr)^{\f{2}{1-\s}}
=\f1{\s^{\f\s{1-\s}}}\f1{\s}
\leq5^{\f14}\f{1}{\s},\quad\forall\ \s\in ]0, 1/5].$$

By substituting \eqref{5.2}-\eqref{5.4} into \eqref{5.1}, we achieve
\begin{equation}\begin{split}\label{5.5}
\f{d}{dt}\|\om\|_{L^2}^2+\f54\|\nabla\om\|_{L^2}^2
\leq&\f14\|\nabla\pb\ub\|_{L^2}^2+C\|u_0\|_{L^2}^2\bigl(|\tau'|^2+|\nu'|^2\bigr)
+C\|\om\|_{L^2}^2\|\ub\|_{\dH^{\f32}}^2\\
&+\f{C}{\s}\bigl(\|\pb\ub\|_{L^2}^{\f{2}{1-\s}}+\|\om\|_{L^2}^{\f{2}{1-\s}}\bigr)
\|\ub\|_{\dH^{\f32}}^{\f{2(1-2\s)}{1-\s}}\|\nabla u\|_{L^2}^{\f{2\s}{1-\s}}.
\end{split}\end{equation}

Similarly, by taking $L^2$ inner product of the second equation in \eqref{4.3} with $\pb\ub,$ and using the expression \eqref{4.4} for the pressure function, we obtain
\begin{equation}\begin{split}\label{5.6}
\f12\f{d}{dt}\|\pb\ub\|_{L^2}^2+&\|\nabla\pb\ub\|_{L^2}^2
=\int_{\R^3}\beta'\cdot\bigl(\pa_\beta u+\nabla\ub\bigr)\pb\ub\,dx\\
&+\int_{\R^3}\Bigl(\bigl(\pb^2\D^{-1}-1\bigr)(\pb\ub)^2
+\pb^2\D^{-1}\sum_{\ell,m\in\{\tau,\nu\}}
\pa_\ell u^m\pa_m u^\ell\Bigr)\pb\ub\,dx\\
&+\int_{\R^3}\Bigl(\bigl(2\pb^2\D^{-1}-1\bigr)
\sum_{\ell\in\{\tau,\nu\}}\pb u^\ell\pa_\ell\ub\Bigr)\pb\ub\,dx
\eqdef II_1+II_2+II_3.
\end{split}\end{equation}

Firstly, it follows from a similar derivation of  \eqref{5.2} that
\begin{equation}\begin{split}\label{5.7}
|II_1|\leq\f18\|\nabla\pb\ub\|_{L^2}^2
+C\|u_0\|_{L^2}^2|\beta'|^2.
\end{split}\end{equation}

For $II_2$, by using Sobolev embedding theorem and
the Helmholtz decomposition \eqref{4.2} together with
the $L^p~(1<p<\infty)$ boundness for double Riesz transform, we infer
\begin{equation}\begin{split}\label{5.8}
|II_2|&\leq C\bigl(\|\pb\ub\|_{L^2}\|\pb\ub\|_{L^6}
+\|\nablah\uh\|_{L^2}\|\nablah\uh\|_{L^6}\bigr)\|\pb\ub\|_{L^3}\\
&\leq C\bigl(\|\pb\ub\|_{L^2}+\|\om\|_{L^2}\bigr)
\bigl(\|\nabla\pb\ub\|_{L^2}+\|\nabla\om\|_{L^2}\bigr)
\|\nabla\ub\|_{\dH^{\f12}}\\
&\leq\f1{16}\bigl(\|\nabla\pb\ub\|_{L^2}^2+\|\nabla\om\|_{L^2}^2\bigr)
+C\bigl(\|\pb\ub\|_{L^2}^2+\|\om\|_{L^2}^2\bigr)
\|\ub\|_{\dH^{\f32}}^2.
\end{split}\end{equation}

While along the same line to the estimate of  $I_3,$
we find
\begin{align*}
|II_3|&=\Bigl|\int_{\R^3}\bigl(\pb\uh\cdot\nablah\ub\bigr)
\bigl(2\pb^2\D^{-1}-1\bigr)\pb\ub\,dx\Bigr|\\
&\leq\|\nablah\ub\|_{(\dB^{-\s}_{2,2})_{\beta^\perp}(\dB^{\f12}_{2,1})_\beta}
\bigl\|(\pb\uh)\bigl(2\pb^2\D^{-1}-1\bigr)\pb\ub\bigr\|
_{(\dB^{\s}_{2,2})_{\beta^\perp}(\dB^{-\f12}_{2,\infty})_\beta}\\
&\lesssim\f{1}{\sqrt\s}\|\ub\|_{(\dB^{1-\s}_{2,2})_{\beta^\perp}(\dB^{\f12}_{2,1})_\beta}
\|\pb\uh\|_{\dH^{1-\s,0}_\beta}\|\pb\ub\|_{\dH^{2\s,0}_\beta}.
\end{align*}
Then we get, by a similar derivation of \eqref{5.4}, that
\begin{equation}\label{5.9}
|II_3|\leq\f1{16}\bigl(\|\nabla\om\|_{L^2}^2+\|\nabla\pb\ub\|_{L^2}^2\bigr)
+\f{C}{\s}\bigl(\|\om\|_{L^2}^{\f{2}{1-\s}}+\|\pb\ub\|_{L^2}^{\f{2}{1-\s}}\bigr)
\|\ub\|_{\dH^{\f32}}^{\f{2(1-2\s)}{1-\s}}\|\nabla u\|_{L^2}^{\f{2\s}{1-\s}}.
\end{equation}

By substituting \eqref{5.7}-\eqref{5.9} into \eqref{5.6}, we conclude
\begin{align*}
\f{d}{dt}&\|\pb\ub\|_{L^2}^2+\f32\|\nabla\pb\ub\|_{L^2}^2
\leq\f14\|\nabla\om\|_{L^2}^2+C\|u_0\|_{L^2}^2|\beta'|^2\\
&+C\bigl(\|\pb\ub\|_{L^2}^2+\|\om\|_{L^2}^2\bigr)\|\ub\|_{\dH^{\f32}}^2
+\f{C}{\s}\bigl(\|\pb\ub\|_{L^2}^{\f{2}{1-\s}}+\|\om\|_{L^2}^{\f{2}{1-\s}}\bigr)
\|\ub\|_{\dH^{\f32}}^{\f{2(1-2\s)}{1-\s}}\|\nabla u\|_{L^2}^{\f{2\s}{1-\s}},
\end{align*}
from which and \eqref{5.5}, we deduce  \eqref{ineqprop1}.
This completes the proof of Proposition \ref{prop1}. \end{proof}

\section*{Acknowledgments}
Y. Liu is supported by NSF of China under grant 12101053.
Ping Zhang is supported by National Key R$\&$D Program of China under grant
2021YFA1000800 and K. C. Wong Education Foundation.
He is also partially supported by National Natural Science Foundation of China under Grants  12288201 and 12031006.

\end{document}